\numberwithin{equation}{section}
\theoremstyle{plain}
\newtheorem{Th}{Theorem}[section]
\newtheorem{lem}[Th]{Lemma}
\newtheorem{clm}[Th]{Claim}
\newtheorem{Prop}[Th]{Proposition}
 \theoremstyle{definition}
\newtheorem{?}[Th]{Problem}
\newtheorem{Obs}[Th]{Observation}
\newcommand{\PP}{\mathbb{P}}
\newcommand{\vd}[1]{{\color{green!50!black}{#1}}}
\begin{document}
\author{Vojt\u{e}ch Dvo\u{r}\'ak}
\address[Vojt\u{e}ch Dvo\u{r}\'ak]{Department of Pure Maths and Mathematical Statistics, University of Cambridge, UK}
\email[Vojt\u{e}ch Dvo\u{r}\'ak]{vd273@cam.ac.uk}

\author{Peter van Hintum}
\address[Peter van Hintum]{Department of Pure Maths and Mathematical Statistics, University of Cambridge, UK}
\email[Peter van Hintum]{pllv2@cam.ac.uk}

\author{Marius Tiba}
\address[Marius Tiba]{Department of Pure Maths and Mathematical Statistics, University of Cambridge, UK}
\email[Marius Tiba]{mt576@cam.ac.uk}

\title{Improved bound for Tomaszewski's problem
}


\begin{abstract} 
In 1986, Tomaszewski made the following conjecture. Given $n$ real numbers $a_{1},...,a_{n}$ with $\sum_{i=1}^{n}a_{i}^{2}=1$, then of the $2^{n}$ signed sums $\pm a_{1} \pm ... \pm a_{n}$, at least half have absolute value at most $1$. Hendriks and Van Zuijlen (2020) and Boppana (2020) independently proved that a proportion of at least $0.4276$ of these sums has absolute value at most $1$. Using different techniques, we improve this bound to $0.46$.
\end{abstract}

\maketitle

\section{Introduction} 

Take $a_{1},...,a_{n}$ real numbers such that $\sum_{i=1}^{n}a_{i}^{2}=1$ and consider the randomly signed sum $\sum_{i=1}^n \epsilon_i a_i$, where the $\epsilon_i$ are independent, identically distributed (i.i.d.) Rademacher random variables, i.e. $\PP(\epsilon_i=1)=\PP(\epsilon_i=-1)=\frac12$. In 1986, Tomaszewski (see \cite{guy}) conjectured that $\PP(|\sum_{i=1}^n \epsilon_i a_i|\leq 1)\geq \frac12$. Note that this bound is tight for $n \ge 2$ as we can take for instance $a_{1}=a_{2}=\frac{1}{\sqrt{2}}$, $a_i=0$ for $2 < i \le n$. While various partial results towards this conjecture were proven, the original problem is still open.

Several papers have focussed on showing bounds from below approaching $1/2$. Holzman and Kleitman \cite{holzman} proved that $\PP(|\sum_{i=1}^n \epsilon_i a_i|\leq 1)\geq \frac38$. In fact, they showed the stronger, tight result that $\PP(|\sum_{i=1}^n \epsilon_i a_i| < 1)\geq \frac38$ as long as there is more than one non-zero term. Later, but independently and using different techniques, Ben-Tal, Nemirovski and Roos \cite{bental} obtained the weaker bound of $\frac{1}{3}$. Their method was later refined by Shnurnikov \cite{shnurnikov} to obtain the bound of $0.36$, still weaker than the result of Holzman and Kleitman.

More recently, Boppana and Holzman \cite{boppanaold} obtained a bound of $0.406259$. Using a result of Bentkus and Dzindzalieta \cite{bentkus}, their argument can be improved to actually give a better bound of approximately $0.4276$, as was independently observed by Hendriks and Van Zuijlen \cite{zuijlen} and Boppana \cite{boppananew}. We make further progress on Tomaszewski's conjecture by using different techniques to prove our main theorem.

\begin{Th} 
Let $a_{1},...,a_{n}$ be real numbers such that $\sum_{i=1}^{n}a_{i}^{2}=1$ and let $\epsilon_{i}$ for $i=1,...,n$ be i.i.d. random variables with $\mathbb{P}(\epsilon_{i}=+1)=\mathbb{P}(\epsilon_{i}=-1)= \frac{1}{2}$. Then $\PP(|\sum_{i=1}^n \epsilon_i a_i|\leq 1)\geq 0.46.$
\end{Th}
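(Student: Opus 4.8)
The plan is to reduce to a finite, checkable problem by a combination of case analysis on the size of the largest coefficients and a careful conditioning argument. First I would normalize and order the coefficients so that $a_1 \ge a_2 \ge \dots \ge a_n \ge 0$ (replacing $a_i$ by $|a_i|$ and $\epsilon_i$ by $-\epsilon_i$ where needed does not change the distribution of $|\sum \epsilon_i a_i|$). The key structural dichotomy is whether $a_1$ is large or small. If $a_1 \ge 1/\sqrt{2}$, then at most one other coefficient can be comparably large and the sum is essentially controlled by one or two terms; here one argues directly, conditioning on $\epsilon_1$ (and possibly $\epsilon_2$) and using that the remaining coefficients have small $\ell^2$-norm, so by a Chebyshev/Markov-type bound the conditional sum concentrates. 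If instead all $a_i$ are small, say $a_1 \le \delta$ for a suitable threshold $\delta$, then $\sum \epsilon_i a_i$ is close to a Gaussian with variance $1$, and a Berry–Esseen estimate gives $\PP(|\sum \epsilon_i a_i| \le 1) \ge 2\Phi(1) - 1 - C\delta > 0.46$ provided $\delta$ is small enough; the Berry–Esseen constant times $\delta$ must be beaten, so $\delta$ has to be chosen with care (one wants the "Gaussian gain" of $2\Phi(1)-1 \approx 0.6827$ to comfortably absorb the error).

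The intermediate regime — where $a_1$ is neither close to $1/\sqrt 2$ nor very small, and where several coefficients are of moderate size — is where the real work lies, and this is the step I expect to be the main obstacle. The strategy here is to peel off the largest few coefficients $a_1, \dots, a_k$ (with $k$ bounded, say $k \le 5$ or so depending on the thresholds), condition on the signs $\epsilon_1, \dots, \epsilon_k$, and for each of the $2^k$ sign patterns bound $\PP(|\sum_{i \le k} \epsilon_i a_i + S| \le 1)$ where $S = \sum_{i>k}\epsilon_i a_i$ has variance $\sigma^2 = 1 - \sum_{i \le k} a_i^2$. For the tail $S$ one uses a quantitative anti-concentration/concentration input: the result of Bentkus and Dzindzalieta \cite{bentkus} (already exploited in the $0.4276$ bound) gives sharp bounds on $\PP(|S| \ge t)$ in terms of $t/\sigma$, and Holzman–Kleitman \cite{holzman}-type bounds control $\PP(|S| \le t)$ from below. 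Summing the contributions of the $2^k$ patterns, weighted by $2^{-k}$, and optimizing over the placement of the thresholds reduces the whole problem to verifying finitely many inequalities, each involving the distribution function of a signed sum of at most $k$ fixed-ish reals against explicit one-parameter bounds; these can be checked by a finite computation or by a monotonicity argument in each variable.

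To make the bookkeeping uniform it helps to work with the function $f(t,\sigma) := \inf \PP(|S| \le t)$ over all Rademacher sums $S$ with $\mathrm{Var}(S) = \sigma^2$, together with its known monotonicity in $t$ and behaviour as $\sigma \to 0$ or $\sigma \to 1$, and to establish a few clean lemmas: (i) a "one-big-coefficient" lemma handling $a_1 \ge 1/\sqrt2$; (ii) a "two-big-coefficients" lemma for $a_1, a_2$ both large; (iii) a Berry–Esseen lemma for $a_1 \le \delta$; and (iv) the main interpolation lemma for the middle range, where the conditioning on $\le k$ signs plus \cite{bentkus} is applied. The final proof assembles these: pick the thresholds (the $1/\sqrt2$ boundary is forced, the small-coefficient $\delta$ is chosen from Berry–Esseen, and one or two intermediate thresholds are tuned numerically) so that every case clears $0.46$, with the tightest case being, as usual in this problem, the near-extremal configuration close to $a_1 = a_2 = 1/\sqrt2$. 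The main obstacle is ensuring the interpolation lemma's finitely many inequalities actually hold with the chosen thresholds — this requires the Bentkus–Dzindzalieta bound to be strong enough in the relevant range of $t/\sigma$, and if it is not, one must peel off more coefficients (increase $k$), at the cost of a larger but still finite verification.
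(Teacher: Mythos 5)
There is a genuine gap: the intermediate regime, which you yourself flag as "where the real work lies," is exactly the core of the problem, and the plan you sketch for it would not go through as stated. Conditioning on the signs of the top $k$ coefficients and bounding each of the $2^k$ conditional probabilities separately with the Bentkus--Dzindzalieta bound plus Holzman--Kleitman-type lower bounds is too lossy in the hard sub-regimes (e.g.\ $a_1\approx a_2\approx a_3\approx 1/3$, or $a_1\approx\frac{k-1}{k}$ with many terms $\approx\frac1k$): the constant $3.18$ in \Cref{auxlem} makes that bound worthless at the moderate thresholds $t/\sigma\lesssim 1.5$ that arise per sign pattern, and the $3/8$-type lower bounds are far too weak, so the weighted sum of per-pattern bounds does not reach $0.46$. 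Moreover, your auxiliary function $f(t,\sigma)=\inf\PP(|S|\le t)$ has no usable known bounds in the range you need (good bounds on it are essentially the conjecture itself). The paper gets around this not by per-pattern estimates but by partitioning according to where the \emph{tail sum} $|S|=|\sum_{i\ge3}\epsilon_i a_i|$ lands (so the conditional exceedance probabilities are exactly $0,\frac14,\frac12,\frac34,1$), and then constraining the interval masses $p_i$ by three ingredients your proposal lacks: reflection/stopping-time (mirroring) inequalities such as $p_3+p_4+p_5\le\frac12$ and $p_4+p_5+p_6\le\frac12$ (\Cref{345}, \Cref{456}), a second-moment identity, and — crucially — an induction on $n$ (applying the theorem itself to the renormalized tail) to force mass near the origin, $p_1\ge 0.115$ (\Cref{p1bound}); only with all of these does the resulting linear program (\Cref{linprog}) dip below $0.54$. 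Without some substitute for the mirroring constraints and the inductive lower bound, "verify finitely many inequalities" is not a reduction you can actually carry out.

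Two smaller points. Your small-coefficient case via Berry--Esseen only covers roughly $a_1\lesssim 0.2$ (with constant $\approx0.56$ you need $0.6827-1.12\,a_1\ge0.46$), which is much less than the paper's easy case $a_1+a_2\le1$, $a_3\le0.25$ handled by a single mirroring step plus \Cref{auxlem}; this makes your intermediate regime substantially larger and harder. And in the one-big-coefficient case with threshold $1/\sqrt2$, a Chebyshev bound on the tail gives only about $\tfrac12\cdot0.83<0.46$; you need the Gaussian-type tail bound of \Cref{auxlem} (as in \Cref{abig}) to clear $0.46$ there, so the "Chebyshev/Markov-type" phrasing undersells what is required.
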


Note that partial sums $\sum_{i=1}^k \epsilon_ia_i$ can be interpreted as a random walk with prescribed step sizes. 
This interpretation suggests common techniques like mirroring, symmetry and second moment arguments, as have been used in previous papers on this problem \cite{bental,boppananew,boppanaold,zuijlen,holzman,shnurnikov}. We manage to set up a framework which allows for a tight interplay between all these techniques, by combining them with ideas from linear programming.

Depending on the size of $\max\{|a_i|\}$, we consider four cases: the intermediate ones represent the core of the proof and to tackle them we use a combination of mirroring, symmetry and second moment arguments to reduce the problem to an easily solvable linear program.



The efficacy of the techniques used in this paper is dependent on the specific values of the $a_i$'s. Our division into different cases allows us to push each of the ideas to their limit. Because of the variety of examples of values $a_i$'s showing the tightness of the conjecture in the sense that $\PP(|\sum_{i=1}^n \epsilon_i a_i| < 1)<\frac{1}{2}$ (e.g. $\frac{1}{3},...,\frac{1}{3}$, and the infinite family $\frac{k-1}{k},\frac{1}{k},...,\frac{1}{k}$ for each $k \geq 2$), it seems inescapable to engage in case analysis. However, the current state of the literature seems to lack this approach.

\section{Set up}

Fix a vector $\mathbf{a}=(a_{1},a_{2},... ,a_{n}) $ with $ \sum_{i=1}^{n} a_{i}^{2} =1 $ and $ a_{1} \geq a_{2} \geq ... \geq a_{n} > 0 $. Let $\epsilon_{i}$ for $i=1,...,n$ be i.i.d. random variables with $\mathbb{P}(\epsilon_{i}=+1)=\mathbb{P}(\epsilon_{i}=-1)= \frac{1}{2}$, i.e. independent \emph{Rademacher random variables}. Denote $\mathbb{P}(\mathbf{a})=\mathbb{P}( |\sum_{i=1}^{n} \epsilon_{i} a_{i}| \leq 1 )$. To show that $\mathbb{P}( \mathbf{a} ) \geq 0.46$, we consider the following four cases depending in which interval $a_{1}$ lies: $[0,0.25],[0.25,0.49],[0.49,0.67]$, and $[0.67,1]$. 


We will use induction on the dimension $n$. Note that for $n=1,2$ the result is trivial. For $n=3$, it follows easily too, by noting that all of the sums $a_{1}-a_{2}+a_{3}, -a_{1}+a_{2}+a_{3},-a_{1}+a_{2}-a_{3}, a_{1}-a_{2}-a_{3}$ have absolute value at most $1$. Thus we will further assume $n \geq 4$. The only time we will appeal to the induction hypothesis is in the proof of \Cref{p1bound}.

We write $\mathbb{P}(N(0,1)\geq x)$ for the probability that a standard normal attains a value of at least $x$.

Several times, we will use the following result of Bentkus and Dzindzalieta \cite{bentkus}.

\begin{lem}\label{auxlem}
Let $ a_{1} \geq a_{2} \geq ... \geq a_{n} > 0 $ be such that $ \sum_{i=1}^{n} a_{i}^{2}  \leq 1 $, and let $\epsilon_{i}$ for $i=1,...,n$ be i.i.d. Rademacher random variables. Then we have for any $x \in \mathbb{R}$

$$\mathbb{P}\left(\sum_{i=1}^{n} \epsilon_{i} a_{i} \geq x\right) \leq 3.18 \ \mathbb{P}(N(0,1) \geq x).$$
\end{lem}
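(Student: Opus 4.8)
The plan is to prove the equivalent statement that, for every $n\ge1$, all $a_1\ge\cdots\ge a_n>0$ with $\sum_{i=1}^n a_i^2\le1$, and all $t\in\mathbb{R}$,
$$\frac{\mathbb{P}\!\left(\sum_{i=1}^n\epsilon_i a_i\ge t\right)}{\mathbb{P}(N(0,1)\ge t)}\ \le\ 3.18,$$
and, more precisely, that the supremum of the left-hand side over all such data equals $\frac{1/4}{\mathbb{P}(N(0,1)\ge\sqrt2)}=3.178\ldots$, attained at $n=2$, $a_1=a_2=\tfrac1{\sqrt2}$, $t=\sqrt2$. Throughout, write $S=\sum_{i=1}^n\epsilon_i a_i$ and abbreviate $\Phi(-t)=\mathbb{P}(N(0,1)\ge t)$.

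\emph{Step 1 (reductions).} If $t\le t^\star:=\Phi^{-1}(1-1/3.18)\approx0.484$, then $3.18\,\Phi(-t)\ge1\ge\mathbb{P}(S\ge t)$ and there is nothing to prove; so assume $t>t^\star>0$. Writing $\sigma^2=\sum_i a_i^2\le1$ and $b_i=a_i/\sigma$, one has $\mathbb{P}(S\ge t)=\mathbb{P}(\sum_i\epsilon_i b_i\ge t/\sigma)$ with $t/\sigma\ge t$, and $\Phi(-t/\sigma)\le\Phi(-t)$; hence it suffices to treat unit vectors, $\sum_i a_i^2=1$. Finally, for fixed $a$ the function $t\mapsto\mathbb{P}(S\ge t)/\Phi(-t)$ is nondecreasing on each interval between consecutive atoms of $S$ (constant numerator, strictly decreasing denominator), so its supremum over $t$ is attained at a positive atom of $S$. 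Thus it remains to bound $\mathbb{P}(S\ge t)/\Phi(-t)$ when $\sum_i a_i^2=1$ and $t$ is a positive atom of $S$.

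\emph{Step 2 (reduction to equal weights --- the crux).} Fix $t$ and maximise $\mathbb{P}(S\ge t)$ over the unit sphere. Let $a$ be a maximiser and suppose two coordinates satisfy $a_1>a_i\ge a_j>0$. Keeping $a_i^2+a_j^2$ fixed, move $(a_i,a_j)$ either towards $a_i=a_j$ or towards $a_j=0$. Conditioning on $(\epsilon_\ell)_{\ell\ne i,j}$ and setting $T=\sum_{\ell\ne i,j}\epsilon_\ell a_\ell$, we have $\mathbb{P}(S\ge t)=\tfrac14\sum_{\pm,\pm}\mathbb{P}(T\ge t\mp a_i\mp a_j)$; a case analysis of how the four shifted thresholds $t\mp a_i\mp a_j$ interlace shows that one of the two moves does not decrease this sum. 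Iterating, we may assume all nonzero coordinates of the maximiser are equal, i.e.\ $a=(k^{-1/2},\dots,k^{-1/2},0,\dots,0)$ for some $k\ge1$. (Alternatively, invoke the known classification of extremisers of Rademacher tail probabilities on the sphere.)

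\emph{Step 3 (the one-parameter estimate).} For such $a$, $S\stackrel{d}{=}k^{-1/2}(2B-k)$ with $B\sim\mathrm{Bin}(k,\tfrac12)$, so its positive atoms are $t_{k,m}=k^{-1/2}(2m-k)$ for $k/2<m\le k$, with $\mathbb{P}(S\ge t_{k,m})=2^{-k}\sum_{j=m}^k\binom{k}{j}$. It remains to verify
$$2^{-k}\sum_{j=m}^k\binom{k}{j}\ \le\ 3.18\ \Phi\!\left(-\frac{2m-k}{\sqrt{k}}\right)\qquad\Big(k\ge1,\ \tfrac{k}{2}<m\le k\Big).$$
For small $k$ this is a finite numerical check; the two largest ratios occur at $(k,m)=(1,1)$, giving $\tfrac12/\Phi(-1)=3.15\ldots$, and at $(k,m)=(2,2)$, giving $\tfrac14/\Phi(-\sqrt2)=3.178\ldots<3.18$. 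For large $k$, bound the left-hand side by a Chernoff estimate (using $D(p\|\tfrac12)\ge2(p-\tfrac12)^2$, Pinsker) and the right-hand side from below by a Mills-ratio estimate; this keeps the ratio bounded away from $3.18$, with limiting value at most $1$. Together with Step 1 this proves the lemma.

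\emph{The main obstacle} is the smoothing step in Step 2: $\mathbb{P}(S\ge t)$ is neither smooth nor concave as a function of $(a_i^2)$, so the right two-coordinate move must be selected and its effect on the discrete convolution $T$ tracked exactly --- one must control how much mass of $T$ crosses each of the thresholds $t\mp a_i\mp a_j$ under the perturbation and ensure the required monotonicity holds simultaneously for every admissible $t$. A secondary subtlety in Step 3 is that large $k$ pushes $S$ towards a genuine Gaussian (ratio $\to$ a constant $\le1$), so the supremum is not a limit but is attained at the very small value $k=2$; consequently $3.18$ is an honest, essentially tight rounding of $1/4\big/\Phi(-\sqrt2)$.
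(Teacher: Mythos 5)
The paper itself does not prove \Cref{auxlem}: it is quoted directly from Bentkus and Dzindzalieta \cite{bentkus}, so what you are offering is a re-proof of their theorem. Within your outline, Steps 1 and 3 are essentially fine (the supremum over $t$ sits at atoms of $S$, and the equal-coefficient case reduces to a binomial computation whose worst ratio is $\tfrac14/\mathbb{P}(N(0,1)\ge\sqrt2)\approx 3.178$ at $k=2$, with routine Chernoff/Mills estimates for large $k$). The genuine gap is Step 2, which you yourself call the crux but do not prove. Writing $u=a_i+a_j$ and $v=a_i-a_j$, your move keeps $u^2+v^2$ fixed, and the quantity to be maximized is $g(u)+g(v)$ with $g(s)=\mathbb{P}(T\ge t-s)+\mathbb{P}(T\ge t+s)$, a step function depending on the arbitrary discrete law of $T$. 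The assertion that such a function is always maximized at one of the two endpoints of the arc (i.e.\ at $a_i=a_j$ or $a_j=0$) is exactly what needs to be established; no case analysis is actually given, and there is no monotonicity or convexity of $g$ available to make the claim hold uniformly in $T$ and $t$, so the ``smoothing'' iteration never gets off the ground.

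Moreover, the fallback you suggest --- ``invoke the known classification of extremisers of Rademacher tail probabilities on the sphere'' --- is not available: determining $\sup_{\|a\|_2=1}\mathbb{P}\left(\sum_i\epsilon_ia_i\ge t\right)$ together with its extremal configurations is a long-standing open problem, settled only for restricted ranges of $t$; the conjectured equal-coefficient extremality is precisely what one cannot currently cite. This is why Bentkus and Dzindzalieta do not argue via extremizers at all: their proof is an induction on $n$, conditioning on the last sign to write $\mathbb{P}(S\ge x)=\tfrac12\left[\mathbb{P}(S'\ge x-a_n)+\mathbb{P}(S'\ge x+a_n)\right]$ and closing the induction with concavity-type properties of the Gaussian tail plus separate elementary (Chebyshev-type) bounds when $x-a_n$ is small. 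As it stands, your argument proves the lemma only for equal coefficients; the reduction to that case is missing, and repairing it along your symmetrization route would amount to resolving an open extremal problem rather than reproving the cited theorem.
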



\section{Easy cases - $a_{1}$ small or large}

In this section, we handle the more straightforward cases when either $a_{1}+a_{2} \leq 1, a_{3} \leq 0.25$ or when $a_{1} \geq 0.67$. Here we only need simple mirroring arguments, accompanied by the tail bound provided by \Cref{auxlem}.

\begin{Prop}\label{smalla}
If $a_1+a_2 \le 1$ and $a_3\le 0.25$, then $\mathbb{P}(\mathbf{a})\ge 0.46$.
\end{Prop}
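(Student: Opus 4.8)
The plan is to condition on the signs of $\epsilon_1$ and $\epsilon_2$ and exploit the hypotheses $a_1+a_2\le 1$ and $a_3\le 0.25$. Write $S=\sum_{i=1}^n\epsilon_i a_i$ and $T=\sum_{i=3}^n\epsilon_i a_i$, so that $S\in\{\pm a_1\pm a_2+T\}$ with each of the four sign patterns equally likely. Since $\sum_{i=3}^n a_i^2\le 1$ and the $a_i$ for $i\ge 3$ are bounded by $0.25$, the variable $T$ is concentrated, and I would like to show that for a large proportion of outcomes of $T$, at least two (in fact, typically three or four) of the four values $\pm a_1\pm a_2+T$ already lie in $[-1,1]$. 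The key elementary observation: if $|T|\le 1-a_1-a_2$, then all four values have absolute value at most $1$; if $|T|\le 1-a_1+a_2$ (using $a_1+a_2\le 1$, so $1-a_1+a_2\ge 2a_2\ge 0$), then the two values $\operatorname{sgn}(T)(a_1-a_2)+T$ — wait, more carefully: $a_1-a_2+T$ and $-(a_1-a_2)+T=-a_1+a_2+T$ together with the right choice of outer sign give at least two values in range whenever $|T|\le 1-a_1+a_2$, and in fact one checks that $|{-a_1+a_2+T}|\le 1$ and $|a_1-a_2+T|\le 1$ cannot both fail for such $T$. So the bad event for ``at least two of four in range'' is contained in $\{|T|>1-a_1+a_2\}$, hence contained in $\{|T|>2a_2\}$ when $a_1\le 2a_2$, or handled separately when $a_1>2a_2$.

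**The main estimate.**
So I would split further on whether $a_1\ge 2a_2$ or not. If $a_1< 2a_2$, then $1-a_1+a_2> 2a_2\ge 2a_n$ and more importantly $1-a_1+a_2\ge a_2\ge\dots$; I want to bound $\PP(|T|>1-a_1+a_2)$ from above by something comfortably below $0.54$, so that at least two of the four sums lie in $[-1,1]$ with probability $\ge 0.46$. Here is where I invoke \Cref{auxlem}: applied to the tail sum $T=\sum_{i\ge 3}\epsilon_i a_i$ (whose squared sum is $\le 1-a_1^2-a_2^2$), it gives $\PP(T\ge x)\le 3.18\,\PP(N(0,1)\ge x)$, and by symmetry $\PP(|T|\ge x)\le 6.36\,\PP(N(0,1)\ge x)$. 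I need $x=1-a_1+a_2$ to be large enough that $6.36\,\PP(N(0,1)\ge x)< 0.54$, i.e. $\PP(N(0,1)\ge x)< 0.0849$, i.e. $x> 1.37$ or so — which is false in general since $x$ can be as small as roughly $2a_2$ and $a_2$ can be tiny. So the crude bound is not enough by itself, and I must use that $T$ has small variance when $a_1,a_2$ are both small: $\operatorname{Var}(T)=1-a_1^2-a_2^2$, and when this is close to $1$ the sum $a_1^2+a_2^2$ is small so $a_2$ is small, making $x$ small — the two effects fight each other. The resolution is to use a scaled version: apply \Cref{auxlem} to $(a_i/\sqrt{1-a_1^2-a_2^2})_{i\ge 3}$ and the threshold $x/\sqrt{1-a_1^2-a_2^2}$, and then optimise the resulting one-variable bound over the feasible region $\{a_1+a_2\le 1,\ a_1\ge a_2,\ a_1^2+a_2^2\le 1\}$, perhaps after noting the worst case is on a boundary. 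A cleaner route, which I would try first: when $a_2$ is small, use a second-moment/Chebyshev bound $\PP(|T|>x)\le \operatorname{Var}(T)/x^2=(1-a_1^2-a_2^2)/x^2$ instead of \Cref{auxlem}, and when $a_2$ is not small use \Cref{auxlem}; the crossover can be chosen to make both bounds beat $0.54$.

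**Handling the complementary range and the main obstacle.**
If instead $a_1\ge 2a_2$ (so $a_1$ is the dominant term but still $a_1+a_2\le 1$, hence $a_1\le 1$), I condition only on $\epsilon_1$: the two sums $a_1+T'$ and $-a_1+T'$ with $T'=\sum_{i\ge 2}\epsilon_i a_i$ give at least one in range whenever $|T'|\le 1-a_1+\text{(something)}$; more robustly, split on $\epsilon_1$ and $\epsilon_2$ again and note $a_1-a_2+T\ge 0$-side arguments as above still work because $a_1-a_2\le a_1\le 1$ and $a_1+a_2\le 1$. The upshot is that in every subcase the ``at least two of four'' event fails only when $|T|$ (or $|T'|$) exceeds a threshold that is at least $\min(1-a_1-a_2,\,\text{const})$-free-but-controllable, and the tail is bounded using \Cref{auxlem} combined with a variance bound. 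I expect the main obstacle to be precisely the parameter optimisation: showing that $\max$ over the feasible $(a_1,a_2)$ of the relevant tail bound stays below $0.54$ (equivalently the in-range probability stays $\ge 0.46$) with the constant $3.18$ from \Cref{auxlem}. This is a genuinely numerical two-variable optimisation where the constant $0.46$ (rather than $\tfrac12$) gives the slack; I would reduce it to a boundary-of-region check and then a single-variable inequality verified by monotonicity plus interval arithmetic, rather than attempting a closed form.
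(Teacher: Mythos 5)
There is a genuine gap, in fact two. First, your accounting loses a factor of $\tfrac12$: the event ``at least two of the four values $\pm a_1\pm a_2+T$ lie in $[-1,1]$'' having probability at least $0.46$ only gives $\PP(\mathbf{a})\ge \tfrac12\cdot 0.46=0.23$, because conditional on that event only (at least) half of the four equally likely choices of $(\epsilon_1,\epsilon_2)$ are good. To reach $0.46$ by this route you would need $\PP(|T|\le 1-a_1+a_2)\ge 0.92$, i.e.\ a tail bound of $0.08$, not $0.54$. Second, and more fundamentally, that tail probability is genuinely too large in the regime this proposition must cover: taking $a_i=1/\sqrt{n}$ for all $i$ (which satisfies $a_1+a_2\le 1$ and $a_3\le 0.25$), the sum $T$ is approximately standard normal, so $\PP(|T|>1-a_1+a_2)\approx \PP(|N(0,1)|>1)\approx 0.32$. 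No choice of Chebyshev versus \Cref{auxlem}, scaled or not, can certify a bound of $0.08$ (or even rule out much worse), because the truth is $0.32$; and even the exact four-interval accounting (weights $1,\tfrac34,\tfrac12,\tfrac14,0$ on the successive intervals) cannot be verified from second-moment and Bentkus--Dzindzalieta constraints alone, since those constraints are consistent with all of the mass of $|T|$ sitting just above $1+a_1+a_2$ when $a_1,a_2$ are small. This is exactly why the paper reserves the conditioning-plus-linear-program strategy for the intermediate range $0.25\le a_3\le a_1\le 0.49$, where extra mirroring constraints and a lower bound on $p_1$ are available.

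The missing idea for the present range is reflection at a stopping time. The paper runs the walk $X_t=\sum_{i\le t}\epsilon_ia_i$, stops at the first time $T$ with $|X_T|>0.75$, and reflects to get $Y_t=2X_T-X_t$, which has the same distribution as $X_n$. The hypotheses $a_1+a_2\le 1$ and $a_3\le 0.25$ enter only to guarantee $0.75\le |X_T|\le 1$ at the stopping time; from this, if both $|X_n|>1$ and $|Y_n|>1$, then $X_n,Y_n$ have opposite signs and $\max(|X_n|,|Y_n|)>1+2|X_T|\ge 2.5$, an event of probability less than $0.08$ by \Cref{auxlem} at threshold $2.5$ plus a union bound. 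Averaging the two identically distributed variables then gives $\PP(\mathbf{a})\ge\tfrac12(1-0.08)=0.46$. The crucial feature your approach lacks is that the reflection couples the ``bad'' outcomes pairwise so that failure of both forces a far tail event at $2.5$ standard deviations, where the constant $3.18$ bound is effective, rather than a tail event at roughly one standard deviation, where it is not.
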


\begin{proof}[Proof of \Cref{smalla}] Define the following random process $ (X_{t})_{t=0}^{n} $. Let $ X_{0}=0 $, and for $1\le t \leq n $, let $X_{t}=\sum_{i=1}^{t} \epsilon_{i} a_{i}$. Let
$$
T=\begin{cases}\text{inf}\{1 \leq t \leq n: |X_{t}|>0.75\} &\text{ if } \{1 \leq t \leq n: |X_{t}|>0.75\}  \neq \emptyset \text{, }\\
n+1 &\text{ otherwise.}\end{cases}
$$
Then $T$ is a stopping time.
Also define random process $ (Y_{t})_{t=0}^{n} $ by setting $Y_{t}=X_{t}$ for $0 \leq t \leq T$ and $ Y_{t}=2X_{T}-X_{t} $ for $n \ge t>T$. Now, $ Y_{n} $ has the same distribution as $X_n=\sum_{i=1}^{n} \epsilon_{i} a_{i}$. 

\begin{clm}\label{protomirror} $\mathbb{P}(|X_{n}|>1 \text{ and } |Y_{n}|>1) < 0.08$.
\end{clm}

\begin{proof}[Proof of \Cref{protomirror}]


Consider the event $|X_n|>1$, and $|Y_n|>1$. We shall show that in this case we have either $|X_n| > 2.5$ or $|Y_n| > 2.5$. By construction it follows that $1 \le T \le n$. Furthermore, we have $0.75 \leq |X_T|\leq 1$, where the upper bound follows from the condition $a_1+a_2\leq 1$ in the case $T=1,2$, and from the condition $ a_3 \leq 0.25$ in the case $3\leq T \leq n$. On the one hand by construction we have $2 \geq 2|X_T|=|X_n+Y_n|$ and on the other hand by assumption we have $2<|X_n|+|Y_n|$. It follows that $|X_n+Y_n| \neq |X_n|+|Y_n|$ which implies that $X_n,Y_n$ have different signs which implies that $|X_n+Y_n|=||X_n|-|Y_n||$.  Therefore, putting all together we have that \begin{align*}
    1.5&\leq 2|X_T|\\
    &=|X_n+Y_n|\\
    &=||X_n|-|Y_n||\\
    &= \max(|X_n|,|Y_n|) - \min(|X_n|,|Y_n|) \\
    &< \max(|X_n|,|Y_n|) -1.
\end{align*}
We get that either $|X_n| > 2.5$ or $|Y_n| > 2.5$. We conclude with the following sequence of inequalities.
\begin{align*}
\mathbb{P}(|X_{n}|>1 \text{ and } |Y_{n}|>1) &\leq \mathbb{P}(|X_{n}|>2.5 \text{ or } |Y_{n}|>2.5) \\
&\leq 2 \, \mathbb{P}(|X_{n}|>2.5) \\
&\leq 6.36 \, \mathbb{P}(|N(0,1)|>2.5)  <0.08,
\end{align*}
where the second inequality follows from the union bound and from the fact that $X_{n},Y_{n}$ have the same distribution and the third inequality follows from \Cref{auxlem}.

\end{proof}

Returning to the proof of the proposition, since $\mathbb{P}(\mathbf{a})=\mathbb{P}(|X_{n}| \leq 1)=\mathbb{P}(|Y_{n}| \leq 1)$, we obtain
\begin{align*}
\mathbb{P}(\mathbf{a})&=\frac{1}{2}\mathbb{P}(|X_{n}| \leq 1)+\frac{1}{2}\mathbb{P}(|Y_{n}| \leq 1) \\
&\geq \frac{1}{2}(1-\mathbb{P}(|X_{n}|>1 \text{ and } |Y_{n}|>1)) \\
&\geq \frac{1}{2}(1-0.08) \\
&=0.46,
\end{align*}

which concludes the proof of \Cref{smalla}.
\end{proof}

\begin{Prop}\label{abig}
If $a_1\ge 0.67$ then $\mathbb{P}(\mathbf{a})\ge 0.46$.
\end{Prop}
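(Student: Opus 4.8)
The plan is to condition on the sign $\epsilon_1$ and exploit that $a_1 \ge 0.67$ is so large that once we fix $\epsilon_1$, the remaining walk $\sum_{i\ge 2}\epsilon_i a_i$ has to move quite far to push $|\sum_i \epsilon_i a_i|$ outside $[-1,1]$ in the ``wrong'' direction, while in the ``right'' direction there is a lot of room. Concretely, by symmetry $\mathbb{P}(\mathbf{a}) = \mathbb{P}(|a_1 + S| \le 1)$ where $S = \sum_{i=2}^n \epsilon_i a_i$, and $S$ is a symmetric random variable with $\sum_{i\ge 2} a_i^2 = 1 - a_1^2 \le 1 - 0.67^2 < 0.5511$. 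So $\mathbb{P}(\mathbf{a}) = \mathbb{P}(-1 - a_1 \le S \le 1 - a_1) \ge \mathbb{P}(S \ge a_1 - 1) - \mathbb{P}(S > 1 + a_1) = \mathbb{P}(S \ge a_1-1) - \mathbb{P}(S > a_1+1)$, using symmetry of $S$ in the first term. Since $a_1 - 1 \le -0.33 < 0$, the event $S \ge a_1-1$ contains $S \ge 0$, which by symmetry (and the fact that $\mathbb{P}(S=0)\ge 0$) has probability at least $1/2$. The term $\mathbb{P}(S > 1+a_1)$ is a genuine tail: since $1 + a_1 \ge 1.67$ and $S$ has variance $\le 0.5511$, we bound it by \Cref{auxlem} applied to the normalized vector $(a_2,\dots,a_n)/\sqrt{1-a_1^2}$, giving $\mathbb{P}(S > 1+a_1) = \mathbb{P}\bigl(\sum_{i\ge 2}\epsilon_i a_i/\sqrt{1-a_1^2} > (1+a_1)/\sqrt{1-a_1^2}\bigr) \le 3.18\,\mathbb{P}(N(0,1) \ge (1+a_1)/\sqrt{1-a_1^2})$.

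The key step is then to check that $3.18\,\mathbb{P}\bigl(N(0,1) \ge (1+a_1)/\sqrt{1-a_1^2}\bigr) \le 0.04$ uniformly for $a_1 \in [0.67, 1)$. The function $t \mapsto (1+t)/\sqrt{1-t^2} = \sqrt{(1+t)/(1-t)}$ is increasing on $[0,1)$, so its minimum over the range is attained at $a_1 = 0.67$, where it equals $\sqrt{1.67/0.33} \approx 2.249$. Since $\mathbb{P}(N(0,1) \ge 2.249)$ is roughly $0.0123$, we get $3.18 \times 0.0123 \approx 0.039 < 0.04$, and for larger $a_1$ the bound only improves. Hence $\mathbb{P}(\mathbf{a}) \ge 1/2 - 0.04 = 0.46$.

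I would organize the write-up as: (i) reduce via symmetry to $\mathbb{P}(|a_1+S|\le 1)$ and split off the lower tail using symmetry of $S$; (ii) bound $\mathbb{P}(S \ge a_1 - 1) \ge 1/2$ since $a_1 - 1 < 0$; (iii) bound $\mathbb{P}(S > 1+a_1)$ by \Cref{auxlem} after rescaling, and (iv) verify the numerical inequality using monotonicity of $\sqrt{(1+t)/(1-t)}$ to reduce to the endpoint $a_1 = 0.67$. The main obstacle — really the only place requiring care — is step (iii)/(iv): one must apply \Cref{auxlem} to the rescaled vector (whose squared norm is exactly $1$, so the hypothesis $\sum a_i^2 \le 1$ is met with equality, which is fine) and then confirm the standard-normal tail estimate is comfortably below $0.04/3.18 \approx 0.0126$ at $a_1 = 0.67$; a slightly sharper constant than $2.249$ or a standard tail bound such as $\mathbb{P}(N(0,1)\ge x) \le \tfrac12 e^{-x^2/2}$ makes this rigorous without numerical integration.
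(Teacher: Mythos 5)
Your argument is correct and is essentially the paper's own proof in slightly different bookkeeping: both condition on $\epsilon_1$ (the paper via ``half the time $\epsilon_1$ has the favourable sign, provided $|\sum_{i\ge2}\epsilon_ia_i|\le 1.67$'', you via symmetry of $S$ and the one-sided split $\mathbb{P}(S\ge a_1-1)-\mathbb{P}(S>1+a_1)$), and both then rescale $(a_2,\dots,a_n)$ to a unit vector and apply \Cref{auxlem} at the same threshold $(1+a_1)/\sqrt{1-a_1^2}\ge 1.67/\sqrt{1-0.67^2}\approx 2.25$ with the same margin, $\tfrac12-3.18\,\mathbb{P}(N(0,1)\ge 2.25)\ge 0.46$. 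One caution on your step (iv): the crude bound $\mathbb{P}(N(0,1)\ge x)\le\tfrac12 e^{-x^2/2}$ gives about $0.0398$ at $x\approx 2.25$, and even $e^{-x^2/2}/(x\sqrt{2\pi})$ gives about $0.0141$, both exceeding the required $0.04/3.18\approx 0.0126$, so you must use the actual numerical tail value $\approx 0.0122$ (as the paper does when it writes $\approx 0.9202$) rather than such closed-form estimates.
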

\begin{proof}[Proof of \Cref{abig}]
Note that $$\mathbb{P}(\mathbf{a})=\mathbb{P}\left(\left|\sum_{i=1}^{n} \epsilon_{i} a_{i}\right| \leq 1\right) \geq \frac{1}{2} \mathbb{P}\left(\left|\sum_{i=2}^{n} \epsilon_{i} a_{i}\right| \leq 1.67 \right).$$ Consider the unit vector $(b_{2},...,b_{n})$ with $b_{i}=\dfrac{a_{i}}{\sqrt{1-a_{1}^{2}}}$ for $i=2,...,n$, and apply \Cref{auxlem} to conclude that
\begin{align*}
\mathbb{P}\left(\left|\sum_{i=2}^{n} \epsilon_{i} a_{i}\right| \leq 1.67\right) &\geq \mathbb{P}\left(\left|\sum_{i=2}^{n} \epsilon_{i} b_{i}\right| \leq \frac{1.67}{\sqrt{1-0.67^2}} \right)\\
&\geq 1-3.18 \,\mathbb{P}(|N(0,1)|>2.24) \approx 0.9202.
\end{align*}
and hence that $\mathbb{P}(\mathbf{a}) \geq 0.46$.
\end{proof}


So far we resolved the case in which $a_1 \geq 0.67$ and the case in which $a_1+a_2\leq 1$ and $a_3 \leq 0.25$, so it is enough to consider the following two cases:
\begin{itemize}
\item $ 0.25 \leq a_{3} \leq a_{1} \leq 0.49$
\item $0.49 \leq a_{1} \leq  0.67$
\end{itemize}
Each of these cases shall be treated in a separate section.

\section{First intermediate case - $0.25 \leq a_{3} \leq a_{1} \leq 0.49$}

In this section we prove the following proposition.

\begin{Prop}\label{bigone}
If $0.25 \leq a_{3} \leq a_{1} \leq 0.49$, then $\mathbb{P}(\mathbf{a})\ge 0.46$.
\end{Prop}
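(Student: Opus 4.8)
The plan is to condition on the signs $\epsilon_1,\epsilon_2,\epsilon_3$ of the three largest coordinates and reduce the statement to a small linear program about the tail of the remaining walk $R:=\sum_{i=4}^{n}\epsilon_i a_i$, whose variance is $v:=1-a_1^2-a_2^2-a_3^2\in(0,1)$. Since $a_1,a_2,a_3\in[0.25,0.49]$ we have $a_2+a_3>a_1$, so the four numbers
\[
q_1:=a_1+a_2+a_3,\quad q_2:=a_1+a_2-a_3,\quad q_3:=a_1-a_2+a_3,\quad q_4:=-a_1+a_2+a_3
\]
are all positive, satisfy $q_1=q_2+q_3+q_4$ and $q_3\ge a_3\ge 0.25$, and $|\epsilon_1a_1+\epsilon_2a_2+\epsilon_3a_3|$ equals each $q_j$ with probability $\tfrac14$. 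Using $R\stackrel{d}{=}-R$, conditioning on $\epsilon_1,\epsilon_2,\epsilon_3$ in the identity $\mathbb{P}(\mathbf{a})=1-2\,\mathbb{P}\bigl(\sum_i\epsilon_ia_i>1\bigr)$ gives
\[
\mathbb{P}(\mathbf{a})=1-\frac14\sum_{j=1}^{4}\bigl(\mathbb{P}(R>1-q_j)+\mathbb{P}(R>1+q_j)\bigr),
\]
so it is enough to prove $\sum_{j=1}^{4}\bigl(\mathbb{P}(R>1-q_j)+\mathbb{P}(R>1+q_j)\bigr)\le 2.16$.

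Next I would assemble the linear information available about the law of $R$; write $\bar F(t)=\mathbb{P}(R>t)$. We have (i) symmetry; (ii) the second-moment identity $\mathbb{E}R^2=v$; (iii) \Cref{auxlem}, applied to the unit vector $(a_i/\sqrt v)_{i=4}^{n}$, giving $\bar F(t)\le 3.18\,\mathbb{P}(N(0,1)\ge t/\sqrt v)$ for all $t$; and (iv) a family of concentration inequalities obtained by running the mirroring argument of \Cref{smalla} on the walk $\bigl(\sum_{i=4}^{t}\epsilon_ia_i\bigr)_{t}$, whose steps have size at most $a_3\le 0.49$: stopping this walk the first time its absolute value exceeds $L-0.49$ and reflecting it afterwards shows, for every $L\ge 0.49$,
\[
\mathbb{P}(|R|\le L)\ \ge\ \tfrac12-\mathbb{P}\bigl(|R|>3L-0.98\bigr),\qquad\text{equivalently}\qquad \bar F(L)-\bar F(3L-0.98)\le\tfrac14 .
\]
The bounds in (iv) are the crucial extra ingredient: constraints (i)--(iii) alone are also satisfied by laws of $R$ concentrated just outside a symmetric gap around $0$ (so that $\bar F(t)\approx\tfrac12$ for $t$ slightly below $1-q_3$), which push the target sum above $2.16$ and leave only a bound short of $0.46$; (iv) excludes exactly these configurations.

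For a fixed triple $(a_1,a_2,a_3)$, both the target $\sum_j(\bar F(1-q_j)+\bar F(1+q_j))$ and the constraints (i),(iii),(iv) together with the equality (ii) are affine in the atom masses of a finitely supported symmetric law of $R$ carried by the fixed finite set of relevant points — namely $0$, the shifts $1\pm q_j$, and the points $L$ and $3L-0.98$ appearing in (iv). Hence maximizing the target over all such laws is a small linear program, and one solves it and checks the value does not exceed $2.16$. It then remains to control the dependence on $(a_1,a_2,a_3)$ over the region $0.25\le a_3\le a_2\le a_1\le 0.49$: the $q_j$ and $v$, hence all the data of the linear program, vary continuously there, so it suffices to verify the inequality either on a sufficiently fine grid with an explicit modulus of continuity, or via monotonicity of the optimal value in the parameters.

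The step I expect to be the main obstacle is the calibration and bookkeeping in (iv): one must choose finitely many auxiliary thresholds $L$ so that the resulting linear program is actually strong enough to reach $0.46$ — the margin over what the tail bound and the second moment give on their own is small — and then turn the reduction to finitely many configurations into a genuinely rigorous argument rather than a numerically convincing one. A secondary technical point is the careful handling of atoms of $R$ at the threshold points, which determines whether each quantity is a "$>$" or "$\ge$" probability and hence the direction of the various small slacks.
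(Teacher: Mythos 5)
Your reduction (conditioning on $\epsilon_1,\epsilon_2,\epsilon_3$, the identity $\mathbb{P}(\mathbf{a})=1-\tfrac14\sum_j(\bar F(1-q_j)+\bar F(1+q_j))$, and the validity of constraints (i)--(iv)) is fine, but the step you yourself flagged as the risk is where the argument genuinely breaks: the linear program built from (i)--(iv) is \emph{not} strong enough to give $2.16$ on all of the region $0.25\le a_3\le a_2\le a_1\le 0.49$. Concretely, take $a_1=a_2=a_3=0.27$, so $v=0.7813$, $q_1=0.81$, $q_2=q_3=q_4=0.27$, and the target is $\bar F(0.19)+3\bar F(0.73)+3\bar F(1.27)+\bar F(1.81)$. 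Consider the symmetric law with atom pairs (total masses) $0.185$ at $\pm(0.19+\delta)$, $0.5$ at $\pm(0.73+\delta)$, and $0.315$ at $\pm(1.27+4\delta)$ for tiny $\delta$ (nudging one radius to make $\mathbb{E}R^2$ exactly $v$). Every window $(L,3L-0.98]$ with $L\ge 0.49$ contains at most one of these atom pairs, so all of your constraints (iv) hold (each pair has mass $\le \tfrac12$); the Bentkus--Dzindzalieta caps hold as well ($\bar F(0.73^-)=0.4075\le 3.18\,\mathbb{P}(N\ge 0.73/\sqrt v)\approx 0.65$ and $\bar F(1.27^-)=0.1575\le 3.18\,\mathbb{P}(N\ge 1.27/\sqrt v)\approx 0.24$); yet the target equals $0.5+3(0.4075+0.1575)\approx 2.195>2.16$, i.e.\ your LP can only certify $\mathbb{P}(\mathbf{a})\gtrsim 0.451$. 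This parameter point is squarely inside the hypothesis of the proposition and is not covered by \Cref{smalla} or \Cref{abig}, so it cannot be discharged by the other cases. (The failure persists throughout the regime where $a_1,a_2,a_3$ are all close to $0.25$--$0.28$; the culprit is that you calibrated the mirroring in (iv) with the worst-case step bound $0.49$, and that after conditioning on three coordinates your innermost threshold $1-q_1=1-a_1-a_2-a_3$ is too small to be useful.)

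The paper's proof supplies exactly the ingredient your plan lacks. It conditions only on $\epsilon_1,\epsilon_2$ (so $a_3$ stays inside the residual sum $S$), uses mirroring lemmas whose thresholds are calibrated to the actual $a_1,a_2$ (with overshoot controlled by $a_3\le a_2$ rather than $0.49$), and --- crucially --- adds a \emph{lower} bound on the central mass, $p_1=\mathbb{P}(|S|\le 1-a_1-a_2)\ge 0.115$ when $a_1+a_2\le 0.665$ (\Cref{p1bound}), proved via the induction hypothesis applied to a renormalized tail together with a sign-flipping argument. Your proposal never invokes induction and has no counterpart of this constraint, and the counterexample above shows some such lower bound on the central region (or at least mirroring with window constant tied to $a_3$ rather than $0.49$) is needed precisely in the small-$a_i$ regime. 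Using step bound $a_3$ in (iv) does kill the specific configuration above, so your scheme might be repairable, but that repair plus the deferred grid verification over all $(a_1,a_2,a_3)$ is the actual content of the proof and remains to be done; as written, the argument has a genuine gap.
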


The strategy is to produce a carefully designed partition of the probability space generated by the possible outcomes of $|\sum_{i\geq 3} \epsilon_i a_i|$. In order to bound the probabilities of these events, the idea is to rely one some mirroring and reflection constructions. Finally, we reduce the problem to an easy linear program.



Assume throughout this section that $0.25 \leq a_{3} \leq a_{1} \leq 0.49$. Let $S=\sum_{i=3}^{n} \epsilon_{i} a_{i}$. Consider the following seven intervals which partition the positive half-line in this order: $I_{1}= [ 0,1-a_{1}-a_{2} ], I_{2}= (1-a_{1}-a_{2},1-a_{1}+a_{2} ], I_{3}= ( 1-a_{1}+a_{2},1+a_{1}-a_{2} ], I_{4}= (1+a_{1}-a_{2},1+a_{1}+a_{2} ], I_{5}= (1+a_{1}+a_{2}, 3-3a_{1}+a_{2} ], I_{6}= (3-3a_{1}+a_{2},3+3a_{1}-5a_{2} ], I_{7}= (3+3a_{1}-5a_{2}, \infty)  $. For $i=1,...,7$, denote $p_{i}= \mathbb{P}(|S| \in I_{i})$. 

Considering the four choices for $(\epsilon_{1},\epsilon_{2})$, by the way this intervals are constructed and by the restrictions on $a_1,a_2,a_3$ we have that 
$$\mathbb{P}\left(\left|\sum_{i=1}^{n} \epsilon_{i} a_{i}\right| > 1 \ \ \Big| \ \ |S| \in I_{j} \right)=\PP\left(|\epsilon_1a_1+\epsilon_2a_2+S|>1 \ \ \Big|\ \ |S|\in I_j\right)=\begin{cases}
0 \text{ if }j=1\\
\frac14 \text{ if } j=2\\
\frac12 \text{ if } j=3\\
\frac34 \text{ if } j=4\\
1\text{ if } j\geq 5.
\end{cases}
$$
Thus we can express
\begin{align}\label{expression}
\mathbb{P}\left(\left|\sum_{i=1}^{n} \epsilon_{i} a_{i}\right| > 1 \right) &= \sum_{j=1}^7 \PP\left(\left|\sum_{i=1}^{n} \epsilon_{i} a_{i}\right| > 1 \ \ \Big| \ \ |S| \in I_{j} \right) \PP(|S|\in I_j)	\\
&=\frac{1}{4}p_{2}+\frac{1}{2}p_{3}+ \frac{3}{4}p_{4}+p_{5}+p_{6}+p_{7}.\nonumber
\end{align}

We shall bound from above this expression, by exploiting various constraints that the $p_{i}$'s satisfy and reducing to a linear program. We collect the constraints into separate lemmas.

Firstly, as the events $\{ |S| \in I_{i} \}$ for $i=1,...,7$ partition our probability space, we know that \begin{equation}\label{fullprob}p_{1}+...+p_{7}=1.\end{equation} Computing the second moment of $S$, we find 
\begin{align}
1-a_{1}^{2}-a_{2}^{2}&=\mathbb{E}(S^{2})\nonumber\\
&=\sum_{i=1}^{7} \PP(|S|\in I_i) \mathbb{E}(S^{2} \, \,| \,\,\, |S| \in I_{i})\nonumber \\
&\geq\sum_{i=1}^{7} p_i (\inf I_i)^2 \label{secmom}\\
&= (1-a_{1}-a_{2})^{2} p_{2} + (1-a_{1}+a_{2})^{2} p_{3} +(1+a_{1}-a_{2})^{2} p_{4} \nonumber \\
&\ \ \ \ 
+ (1+a_{1}+a_{2})^{2} p_{5} +(3-3a_{1}+a_{2})^{2} p_{6}+(3+3a_{1}-5a_{2})^{2} p_{7}\nonumber
\end{align}

\begin{lem}\label{345}
$p_{3}+p_{4}+p_{5} \leq \frac{1}{2}$
\end{lem}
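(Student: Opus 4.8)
The plan is to bound $p_3+p_4+p_5 = \mathbb{P}(|S| \in I_3 \cup I_4 \cup I_5)$ by exhibiting a mirroring/reflection construction that maps the event $\{|S| \in I_3 \cup I_4 \cup I_5\}$ injectively into its complement (or into an event of probability at least as large), thereby forcing the probability to be at most $\frac12$. Concretely, $I_3 \cup I_4 \cup I_5 = (1-a_1+a_2,\, 1+a_1+a_2]$, which has length $2a_1$, and $I_5 = (1+a_1+a_2,\, 3-3a_1+a_2]$. The natural idea is to flip the sign of a single coordinate: since $S = \sum_{i\geq 3}\epsilon_i a_i$, flipping $\epsilon_3$ changes $S$ by $\pm 2a_3$, and by hypothesis $2a_3 \in [0.5, 0.98]$; one checks that this is (slightly more than) enough to push $|S|$ out of the union of those intervals when it lies inside. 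One must be careful that flipping a sign is only a measure-preserving involution on the probability space (it does not change the distribution, and it is its own inverse), so if we can show that whenever $|S| \in I_3\cup I_4 \cup I_5$ we have, after flipping the appropriate sign, $|S'| \notin I_3\cup I_4 \cup I_5$, then the event and its image under the involution are disjoint, giving $p_3+p_4+p_5 \le \frac12$.

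First I would set $b = a_3$ and note $b \ge a_4 \ge \dots \ge a_n > 0$, so $|S| \le a_3 + (a_4 + \dots + a_n)$ and flipping $\epsilon_3$ moves $S \mapsto S - 2\epsilon_3 a_3$. Writing $S = \epsilon_3 a_3 + R$ where $R = \sum_{i \ge 4}\epsilon_i a_i$, the reflected value is $S' = -\epsilon_3 a_3 + R$, so $|S| + |S'| \ge |S - S'| = 2a_3 \geq 0.5$ while also $|S|$ and $|S'|$ straddle $|R|$. The key step is the interval arithmetic: assuming $|S| \in (1-a_1+a_2,\,1+a_1+a_2]$, I would show $|S'|$ cannot also lie in that interval, using $2a_3 \ge 0.5 > 2a_1$ is \emph{not} quite available (since $a_1$ can be up to $0.49$, $2a_1$ up to $0.98$), so the argument needs $a_3$ and $a_1$ both in $[0.25,0.49]$ and compares the endpoints directly: the interval has length $2a_1 \le 0.98$ and $a_3 \ge 0.25$ gives the jump $2a_3 \ge 0.5$; combined with a case split on the sign of $R$ relative to the interval one forces $S'$ out. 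I would handle separately the possibility that the reflected point lands in $I_1$, $I_2$, $I_6$, or $I_7$ — all fine since those are outside $I_3\cup I_4\cup I_5$.

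The main obstacle I anticipate is that a single sign flip of $\epsilon_3$ may not by itself suffice across the whole parameter range (the interval $I_3 \cup I_4 \cup I_5$ has length $2a_1$ which can exceed $2a_3$ when $a_1 > a_3$, though here $a_3 \le a_1$ so this is exactly the bad direction), so the reflection may have to be chosen adaptively — e.g.\ flip $\epsilon_3$ if $S>0$ and the context forces the move downward, or use that $S$ near the \emph{top} of $I_5$ is already close to $3 - 3a_1$, deep in the tail, and can be controlled by \Cref{auxlem} instead of by reflection. An alternative, cleaner route avoiding these case splits: apply the mirroring directly at the level of the full sum by reflecting the random walk $(X_t)$ at the first time $|X_t|$ exceeds a suitable threshold, exactly as in the proof of \Cref{smalla}, tuned so that the reflected endpoint of a walk with $|S|\in I_3\cup I_4\cup I_5$ lands where $|S| \le 1-a_1+a_2$; this trades the delicate interval arithmetic for a stopping-time argument. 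I would try the single-coordinate involution first and fall back to the stopping-time reflection if the endpoint bookkeeping becomes unwieldy.
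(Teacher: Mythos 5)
Your primary route fails for a concrete quantitative reason. First, the union is misidentified: $I_3\cup I_4\cup I_5=(1-a_1+a_2,\,3-3a_1+a_2]$, which has length $2(1-a_1)\geq 1.02$ in this regime (what you wrote, $(1-a_1+a_2,\,1+a_1+a_2]$ of length $2a_1$, is only $I_3\cup I_4$). Flipping $\epsilon_3$ changes $S$ by $2a_3\leq 0.98$, so $\bigl||S'|-|S|\bigr|\leq 0.98$, and a value of $|S|$ near the centre of an interval of length at least $1.02$ cannot be moved out by this involution; even restricted to $I_3\cup I_4$ the jump $2a_3$ can be smaller than the length $2a_1$, as you yourself note. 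The proposed patches do not rescue the statement: an adaptive choice of which sign to flip generally destroys the measure-preserving involution structure you rely on, and handling the top of $I_5$ by \Cref{auxlem} would only yield $p_3+p_4+p_5\leq \tfrac12+\delta$ for some $\delta>0$, which is weaker than the lemma and not what the subsequent linear program uses.

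Your fallback is indeed the paper's route, but as sketched it is not carried out and is aimed at the wrong target. The paper runs the walk $S_t=\sum_{i=3}^t\epsilon_i a_i$ and reflects at the first time $T_1$ with $|S_{T_1}|>1-a_1$; since the steps after index $2$ are at most $a_2$, the overshoot gives $|S_{T_1}|\in(1-a_1,\,1-a_1+a_2]$. One does \emph{not} show that the reflected endpoint lands below $1-a_1+a_2$ (it may well land in $I_6\cup I_7$); one shows that $|S_n|$ and $|U_n|$ cannot \emph{both} lie in $I_3\cup I_4\cup I_5$: if they did, then $|S_n|+|U_n|>2(1-a_1+a_2)\geq 2|S_{T_1}|=|S_n+U_n|$ forces opposite signs, whence $2(1-a_1)\leq |S_n+U_n|=\bigl||S_n|-|U_n|\bigr|<\sup(I_3\cup I_4\cup I_5)-\inf(I_3\cup I_4\cup I_5)=2(1-a_1)$, a contradiction. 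The crux is the exact coincidence that the union's length equals $2(1-a_1)$, twice the stopping threshold, together with the overshoot bound by $a_2$; neither ingredient appears in your sketch, so as it stands the proposal has a genuine gap.
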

\begin{proof}
Consider the random process $(S_{t})_{t=3}^{n}$, given by $S_{t}=\sum_{i=3}^{t} \epsilon_{i} a_{i}$ for $n \ge t \ge 3$. Let

$$
T_{1}=\begin{cases}\text{inf}\{t \geq 3: |S_{t}|>1-a_{1} \} &\text{ if } \{t \geq 3: |S_{t}|>1-a_{1} \}  \neq \emptyset \text{, }\\
n+1 &\text{ otherwise}\end{cases}
$$
Then $T_{1}$ is a stopping time.
Also define random process $ (U_{t})_{t=3}^{n} $ by setting $U_{t}=S_{t}$ for $3 \leq t \leq T_{1}$ and $ U_{t}=2S_{T_{1}}-S_{t} $ for $n \geq t>T_{1}$. Now,  $ U_{n} $ has the same distribution as $S=S_n$. The conclusion of the claim follows if we show that at most one of $|S_n|, |U_n|$ can lie in the interval $I_{3} \cup I_{4} \cup I_{5}$.

Indeed, if $T_1=n+1$, then $U_n=S_n\in I_1\cup I_2$. Otherwise, if $T_1 \leq n$, then $|S_{T_{1}}|\in(1-a_1,1-a_1+a_2]$. Assume for the sake of contradiction that we have both $|S_n|,|U_n| \in I_3\cup I_{4} \cup I_{5}$. On the one hand, by construction we have $2(1-a_1+a_2) \geq 2|S_{T_1}|=|S_n+U_n|$ and on the other hand, by assumption we have  $2(1-a_1+a_2)< |S_n|+|U_n|$. It follows that $|S_n+U_n| \neq |S_n|+|U_n|$, which implies that $S_n,U_n$ have different signs which implies that $|S_n+U_n|=||S_n|-|U_n||$. Putting all together we have that $$2(1-a_1) \leq 2|S_{T_1}|=|S_n+U_n|=||S_n|-|U_n|| < \sup(I_3\cup I_{4} \cup I_{5})-\inf(I_3\cup I_{4} \cup I_{5}) = 2(1-a_1),$$
which gives the desired contradiction.


\end{proof}

\begin{lem}\label{456}
$p_{4}+p_{5}+p_{6} \leq \frac{1}{2}$
\end{lem}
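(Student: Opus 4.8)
The plan is to run essentially the same mirroring argument that proves \Cref{345}, but relative to a threshold tuned to the left endpoint of $I_4$ rather than of $I_3$. I would keep the partial-sum process $(S_t)_{t=3}^n$ with $S_t=\sum_{i=3}^t\epsilon_i a_i$, and introduce the stopping time
$$
T_2=\begin{cases}\inf\{t\ge 3: |S_t|>1+a_1-2a_2\} &\text{if this set is nonempty,}\\ n+1 &\text{otherwise,}\end{cases}
$$
together with the reflected process $(V_t)_{t=3}^n$ given by $V_t=S_t$ for $3\le t\le T_2$ and $V_t=2S_{T_2}-S_t$ for $n\ge t>T_2$. As in \Cref{345}, $V_n$ has the same distribution as $S=S_n$, so it suffices to show that the events $\{|S_n|\in I_4\cup I_5\cup I_6\}$ and $\{|V_n|\in I_4\cup I_5\cup I_6\}$ cannot both occur; taking expectations then yields $2(p_4+p_5+p_6)\le 1$, which is the claim.

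To verify incompatibility I would argue exactly as in \Cref{345}. If $T_2=n+1$, then $|S_n|\le 1+a_1-2a_2<1+a_1-a_2=\inf I_4$, so neither $S_n$ nor $V_n=S_n$ has modulus in $I_4\cup I_5\cup I_6$. If $T_2\le n$, then, since every step of the walk has size at most $a_3\le a_2$, the overshoot past the threshold is at most $a_3$, whence $1+a_1-2a_2<|S_{T_2}|\le 1+a_1-2a_2+a_3\le 1+a_1-a_2$. Suppose for contradiction that $|S_n|,|V_n|\in I_4\cup I_5\cup I_6=(1+a_1-a_2,\,3+3a_1-5a_2]$. On the one hand $|S_n|+|V_n|>2(1+a_1-a_2)\ge 2|S_{T_2}|=|S_n+V_n|$, so $S_n$ and $V_n$ have opposite signs and $|S_n+V_n|=||S_n|-|V_n||$; on the other hand $|S_n|,|V_n|$ lie in an interval of length $(3+3a_1-5a_2)-(1+a_1-a_2)=2+2a_1-4a_2$, so $||S_n|-|V_n||<2+2a_1-4a_2$. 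Combining,
$$2+2a_1-4a_2>||S_n|-|V_n||=|S_n+V_n|=2|S_{T_2}|>2(1+a_1-2a_2)=2+2a_1-4a_2,$$
a contradiction.

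I do not expect a genuine obstacle: the argument is structurally identical to that of \Cref{345}, and the numerology has evidently been arranged so that it works. The threshold is placed a full $a_2\,(\ge a_3)$ below $\inf I_4$ precisely so that the overshoot still keeps $|S_{T_2}|\le\inf I_4$, which is what the opposite-signs step needs, while at the same time twice the threshold equals the length of $I_4\cup I_5\cup I_6$, which is exactly what forces the final contradiction — indeed this is why $I_6$ was defined to terminate at $3+3a_1-5a_2=3(1+a_1-a_2)-2a_2$. The only points worth a second look are the degenerate case $T_2=3$ (where $|S_3|=a_3\le a_2\le 1+a_1-a_2$, so the overshoot bound still holds) and the positivity of the threshold $1+a_1-2a_2$, which holds since $2a_2\le 0.98<1\le 1+a_1$.
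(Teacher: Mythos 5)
Your proof is correct and is exactly the paper's argument: the paper proves this lemma by saying it is "completely analogous" to \Cref{345} with the stopping time $T_2$ at threshold $1+a_1-2a_2$, and you have carried out precisely that mirroring argument, with the overshoot bound $|S_{T_2}|\le 1+a_1-a_2$ and the length computation $2(1+a_1-2a_2)=\sup(I_4\cup I_5\cup I_6)-\inf(I_4\cup I_5\cup I_6)$ filled in correctly.
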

\begin{proof}
The proof is completely analogous to the proof of previous claim, with the stopping time $T_{2}$ defined by 
$$
T_{2}=\begin{cases}\text{inf}\{t \geq 3: |S_{t}|>1+a_{1}-2a_{2} \} &\text{ if } \{t \geq 3: |S_{t}|>1+a_{1}-2a_{2} \}  \neq \emptyset \text{, }\\
n+1 &\text{ otherwise}\end{cases}
$$

\end{proof}


\begin{lem}\label{p1bound} 
$p_{1} \geq 0.115  \cdot \mathbbm{1}_{a_{1}+a_{2} \leq 0.665}$
\end{lem}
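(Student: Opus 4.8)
Write $\delta:=1-a_1-a_2$ and $S:=\sum_{i=3}^n\epsilon_ia_i$, so that $p_1=\mathbb{P}(|S|\le\delta)$. When $a_1+a_2>0.665$ there is nothing to prove, so assume $a_1+a_2\le 0.665$; since $a_1\ge a_2\ge a_3$ gives $a_3\le\frac{a_1+a_2}{2}\le 0.3325$, we get $a_1+a_2+a_3<1$, i.e.\ $\delta\ge a_3$. This last fact is what makes the following peeling step clean. Fix $\epsilon_5,\dots,\epsilon_n$ and put $R=\sum_{i=5}^n\epsilon_ia_i$. The four reals $\pm a_3\pm a_4$ lie inside $[-(a_3+a_4),a_3+a_4]$ with consecutive gaps at most $2\max(a_4,a_3-a_4)\le 2a_3\le 2\delta$, so whenever $|R|\le a_3+a_4+\delta$ there is a choice of $(\epsilon_3,\epsilon_4)$ with $|\epsilon_3a_3+\epsilon_4a_4+R|\le\delta$. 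Averaging over the uniform independent pair $(\epsilon_3,\epsilon_4)$ and over $R$ yields
\[
p_1\;\ge\;\tfrac14\,\mathbb{P}\Big(\Big|\sum_{i=5}^n\epsilon_ia_i\Big|\le a_3+a_4+\delta\Big),
\]
and the same argument peeling off only $\epsilon_3$ gives $p_1\ge\tfrac12\,\mathbb{P}\big(\big|\sum_{i=4}^n\epsilon_ia_i\big|\le a_3+\delta\big)$. (When $n=4$ the first sum is empty and $p_1\ge\tfrac14\ge 0.115$.) I would then split according to the size of $a_3+a_4$.

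\textbf{Regime $a_3+a_4\ge\tfrac12$.} Put $c:=\sqrt{1-a_1^2-a_2^2-a_3^2-a_4^2}$; if $c=0$ then $p_1\ge\tfrac14$, so assume $c>0$. Then $(a_5/c,\dots,a_n/c)$ is a unit vector in dimension $n-4<n$, and the induction hypothesis gives $\mathbb{P}(|\sum_{i=5}^n\epsilon_ia_i|\le c)\ge 0.46$. It thus suffices to check $a_3+a_4+\delta\ge c$. Writing $p=a_1+a_2\in[\tfrac12,0.665]$, $q=a_3+a_4\ge\tfrac12$, $v=p-q\ge 0$, one has $a_3+a_4+\delta=1-v>0$, so squaring reduces the claim to $a_1^2+a_2^2+a_3^2+a_4^2\ge v(2-v)$; bounding $a_1^2+a_2^2\ge\frac{p^2}{2}$ and $a_3^2+a_4^2\ge\frac{q^2}{2}$, this follows from $\frac32 v^2-(p+2)v+p^2\ge 0$, a quadratic in $v$ which on the range $0\le v\le p-\tfrac12\le 0.165$ is decreasing (its vertex is at $\frac{p+2}{3}>0.8$) with value $\frac32(p-\tfrac12)^2-\frac32(p-\tfrac12)+\tfrac14>0$ at the right endpoint. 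Hence $p_1\ge\tfrac14\cdot 0.46=0.115$.

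\textbf{Regime $a_3+a_4<\tfrac12$.} Now $a_4<\tfrac14$, so every step of $S':=\sum_{i=4}^n\epsilon_ia_i$ is below $\tfrac14$, while $\mathbb{E}((S')^2)=1-a_1^2-a_2^2-a_3^2\ge 1-3\cdot 0.3325^2>0.66$; thus $S'$ is a sum of many small steps. By the reduction above it is enough to prove $\mathbb{P}(|S'|\le a_3+\delta)\ge 0.23$, where $a_3+\delta\ge\tfrac14+0.335\ge 0.585$. The induction hypothesis applied to $S'/\sqrt{\mathbb{E}((S')^2)}$ only gives $\mathbb{P}(|S'|\le\sqrt{\mathbb{E}((S')^2)})\ge 0.46$ at the standard deviation, and here $\sqrt{\mathbb{E}((S')^2)}>a_3+\delta$, so one must transfer this lower-tail control down to the threshold $a_3+\delta$. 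The idea is to use the smallness of the steps: split $S'=B+R'$ with the block $B$ carrying most of the variance, bound $\mathbb{P}(|R'|\le m)$ via \Cref{auxlem}, bound $\mathbb{P}(|B|\le a_3+\delta-m)$ via the induction hypothesis applied to $B$, and combine as $\mathbb{P}(|S'|\le a_3+\delta)\ge\mathbb{P}(|R'|\le m)\,\mathbb{P}(|B|\le a_3+\delta-m)$ for a suitable cutoff $m$.

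I expect this last estimate to be the main obstacle. Because the constant in \Cref{auxlem} is $3.18$ rather than $1$, forcing $\mathbb{P}(|R'|\le m)\ge\tfrac12$ pushes $m$ up to a fixed multiple of $\mathrm{sd}(R')$, and since $\mathbb{E}((S')^2)>0.66$ already exceeds $(a_3+\delta)^2$ one cannot simultaneously keep $\mathbb{E}(B^2)\le(a_3+\delta-m)^2$; a crude splitting therefore does not close. Making this regime work seems to need a genuine small‑ball lower bound for $S'$ strictly below its standard deviation — available in the small‑step regime either from a quantitative central limit (Berry--Esseen) estimate or from a dedicated mirroring argument on the walk $(\sum_{i=4}^t\epsilon_ia_i)_t$ in which the small overshoot (less than $\tfrac14$) is what is controlled — and it is here, rather than in the clean regime, that the hypothesis $a_1+a_2\le 0.665$ (equivalently, $a_3+\delta$ comfortably above $\tfrac12$) is used most sharply.
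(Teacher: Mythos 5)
Your reduction and the first regime are fine: peeling off $(\epsilon_3,\epsilon_4)$ to get $p_1\ge\tfrac14\,\mathbb{P}(|R|\le a_3+a_4+\delta)$ is correct (the grid gap bound uses $a_3\le 0.3325<\delta$), and your verification that $a_3+a_4+\delta\ge\sqrt{1-a_1^2-a_2^2-a_3^2-a_4^2}$ when $a_3+a_4\ge\tfrac12$ checks out, so there the induction hypothesis applied to $(a_5,\dots,a_n)$ does give $p_1\ge\tfrac14\cdot0.46$. But the regime $a_3+a_4<\tfrac12$ (i.e.\ $a_4$ small) is left unproven, and you yourself note that the proposed splitting $\mathbb{P}(|S'|\le a_3+\delta)\ge\mathbb{P}(|R'|\le m)\,\mathbb{P}(|B|\le a_3+\delta-m)$ cannot close because $\mathbb{E}((S')^2)>(a_3+\delta)^2$ and the constant $3.18$ in \Cref{auxlem} is too lossy; a Berry--Esseen appeal is only gestured at, not carried out. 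So as it stands the proof of \Cref{p1bound} is incomplete in exactly the range where the lemma is hardest.

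The missing idea is to let the ``block'' you peel off be chosen by a stopping time rather than be the fixed pair $\{3,4\}$, and to exploit that flipping the sign of a whole block preserves the distribution. The paper stops the walk $\sum_{i\ge4}\epsilon_ia_i$ the first time it exceeds $0.335$; since all steps there are at most $a_3\le 0.3325$, the stopped block sum $S_b$ lands in $(0.335,0.335+a_3]$ (or the walk never exceeds $0.335$), which is precisely the ``controlled overshoot'' you say would be needed. Writing $S=S_a+S_b+S_c$ with $S_a=\epsilon_3a_3$ and $S_c$ the post-stopping tail, the induction hypothesis applied to the normalized tail gives $\mathbb{P}(|S_c|\le 0.91)\ge 0.46$ (using $a_1^2+a_2^2+a_3^2\ge 3\cdot 0.25^2$), and a short case analysis shows that whenever $|S_c|\le 0.91$ some choice of block signs $(\tau_a,\tau_b,\tau_c)$ makes $|\tau_aS_a+\tau_bS_b+\tau_cS_c|\le 0.335$; since each signed combination has the same law as $S$ and there are only four patterns up to global sign, one pattern works on an event of probability at least $0.46/4=0.115$. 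This adaptive block decomposition plus block-sign symmetry is what replaces (and uniformly supersedes) your two regimes; without it, or some genuinely quantitative small-ball bound below the standard deviation, your second regime remains a real gap.
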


\begin{proof}
 Let
 $$
\widetilde{T}=\begin{cases}\text{inf}\left\{t \geq 4: \left|\sum_{i=4}^{t}\epsilon_{i}a_{i}\right|>0.335 \right\}  &\text{ if } \left\{t \geq 4: \left|\sum_{i=4}^{t}\epsilon_{i}a_{i}\right|>0.335 \right\}  \neq \emptyset \text{, }\\
n+1 &\text{ otherwise}\end{cases}
$$
Then $\widetilde{T}$ is a stopping time.
Further write
$$S=\sum_{i=3}^{n} \epsilon_{i} a_{i}=S_{a}+S_{b}+S_{c} \text{, and }$$
$$S_{(\tau_a, \tau_b, \tau_c)}=\tau_a S_a + \tau_b S_b + \tau_c S_c  \text{ for any } (\tau_a, \tau_b, \tau_c) \in \{\pm 1\}^3,$$
where $S_{a}= \epsilon_{3}a_{3}$, $S_{b}=\sum_{i=4}^{\widetilde{T}} \epsilon_{i}a_{i} $, and $S_{c}=\sum_{i=\widetilde{T}+1}^{n} \epsilon_{i}a_{i}$ if $\widetilde{T}<n$ and $S_{c}=0$ otherwise. Note that $S_{(\tau_a, \tau_b, \tau_c)}$ has the same distribution as $S$.

Assume $a_1+a_2\leq 0.665$ and recall $a_3>0.25$. In order to show that $\mathbb{P}(|S| \leq 1-a_1-a_2) \geq 0.115$ it is enough to show that $ \mathbb{P}(|S| \leq 0.335) \geq 0.115.$


\begin{Obs}\label{twocond}
The conclusion follows if we show that $\mathbb{P}(|S_{c}| \leq 0.91) \geq 0.46$ and that if $|S_c| \leq 0.91$ then there exists $(\tau_a, \tau_b, \tau_c) \in \{\pm 1\}^3$ such that $|S_{(\tau_a, \tau_b, \tau_c)}| \leq 0.335$.
\end{Obs}

Indeed, let $E$ be the event that $|S_c| \leq 0.91$; we have $\mathbb{P}(E) \geq 0.46$. For every point $p \in E$ there exists $(\tau_a^p, \tau_b^p, \tau_c^p) \in \{\pm 1\}^3$ such that $|S_{(\tau_a^p, \tau_b^p, \tau_c^p)}(p)| \leq 0.335$. Note that by construction $|S_{(-\tau_a^p, -\tau_b^p, -\tau_c^p)}(p)| \leq 0.335$. Therefore, there exists $(\tau_a, \tau_b, \tau_c) \in \{\pm 1\}^3$ and an event $F \subset E$ with $\mathbb{P}(F) \geq 0.115$ such that for every point $p \in F$ we have $|S_{(\tau_a, \tau_b, \tau_c)}(p)| \leq 0.335$. As $S_{(\tau_a, \tau_b, \tau_c)}$ has the same distribution as $S$, it follows that $\mathbb{P}(|S| \leq 0.335) \geq 0.115$.

\begin{clm}
$\mathbb{P}(|S_{c}| \leq 0.91) \geq 0.46$.
\end{clm}
\begin{proof}
 Note that the value of $\widetilde{T}$ is independent of the values of $\epsilon_i$ for $i>\widetilde{T}$, so fix a particular value of $\widetilde{T}$. If $\sum_{i=\widetilde{T}+1}^{n}a_{i}=0$, of course the statement is trivial. Otherwise consider the unit vector $(b_i)_{i=\widetilde{T}+1}^{n}$ defined by $b_i=a_i\left(\sqrt{1-\sum_{j=1}^{\widetilde{T}}a_j^2}\right)^{-1}$. By the induction hypothesis applied to this vector, we find
$$\mathbb{P}(|S_{c}| \leq 0.91) \geq\PP\left(|S_c|\leq\sqrt{1-\sum_{j=1}^{\widetilde{T}}a_j^2}\right)= \PP\left(\left|\sum_{i=\widetilde{T}+1}^n \epsilon_ib_i\right|\leq 1\right)\geq 0.46,$$
where the first inequality follows from the fact that $\widetilde{T}\geq4$ and $a_1,a_2,a_3\geq 0.25$.
\end{proof}

\begin{clm} If $|S_c| \leq 0.91$, then there exists $(\tau_a, \tau_b, \tau_c) \in \{\pm 1\}^3$ such that $|S_{(\tau_a, \tau_b, \tau_c)}| \leq 0.335$.
\end{clm}
\begin{proof}
Assume for the sake of contradiction that $|S_{(\tau_a, \tau_b, \tau_c)}| > 0.335$ for all $(\tau_a, \tau_b, \tau_c) \in \{\pm 1\}^3$. Furthermore, assume without loss of generality that $S_{a}, S_{b}, S_{c} \geq 0$. Recall that $S_a=a_3, S_b \in [0, 0.335+a_3], S_c \in [0,0.91]$, that $0.25<a_3\leq \frac{a_1+a_2}{2} \leq 0.3325$ and furthermore that if $S_c>0$, then $S_b \in (0.335, 0.335+a_3]$.

We have $S_a-S_b+S_c \geq a_3-(0.335+a_3)+0=-0.335$ and hence $S_a-S_b+S_c > 0.335$. Similarly, we have $S_a+S_b-S_c \geq -0.335$ by the following dichotomy; if $S_c=0$, then $S_a+S_b-S_c \geq 0.25 + 0 -0 \geq 0.25$, and if $S_c>0$, then $S_a+S_b-S_c \geq 0.25+0.335-0.91>-0.335$. Hence  $S_a+S_b-S_c \geq 0.335$. Combining these inequalities we get $2S_a=2a_3 \geq 0.67$ which contradicts the hypothesis that $a_3 \leq 0.3325$. The conclusion follows.
\end{proof}

The two claims combined with \Cref{twocond} conclude the proof of the \Cref{p1bound}.
\end{proof}

\begin{lem}\label{linprog}
For any parameters $a_{1},a_{2}$ such that $0.25 \leq a_{2} \leq a_{1} \leq 0.49 $, the output $L(a_{1},a_{2})$ of the following linear program satisfies $L(a_{1},a_{2}) \leq 0.54$.
\begin{center}
   $L(a_{1},a_{2}):=\max \{ \frac{1}{4}x_{2}+\frac{1}{2}x_{3}+ \frac{3}{4}x_{4}+x_{5}+x_{6}+x_{7}$ subject to
 \begin{gather*}
 x_1,..., x_7 \geq 0\\
 x_{1}+x_{2}+x_{3}+x_{4}+x_{5}+x_{6}+x_{7}=1\\
 x_{3}+x_{4}+x_{5} \leq \frac{1}{2} \\
 x_{4}+x_{5}+x_{6} \leq \frac{1}{2} \\
 (1-a_{1}-a_{2})^{2} x_{2} + (1-a_{1}+a_{2})^{2} x_{3} +(1+a_{1}-a_{2})^{2} x_{4}  
+ \\ 
+ (1+a_{1}+a_{2})^{2} x_{5} +(3-3a_{1}+a_{2})^{2} x_{6}+(3+3a_{1}-5a_{2})^{2} x_{7} \leq 1-a_{1}^{2}-a_{2}^{2} \\
x_{1} \geq 0.115 \cdot \mathbbm{1}_{a_{1}+a_{2} \leq 0.665}\}
 \end{gather*}
 \end{center}
\end{lem}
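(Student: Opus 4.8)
The plan is to certify the bound by linear programming duality. Abbreviate the coefficients appearing in the second--moment constraint by $c_2=(1-a_1-a_2)^2$, $c_3=(1-a_1+a_2)^2$, $c_4=(1+a_1-a_2)^2$, $c_5=(1+a_1+a_2)^2$, $c_6=(3-3a_1+a_2)^2$, $c_7=(3+3a_1-5a_2)^2$ (and set $c_1=0$), write $M=1-a_1^2-a_2^2$ and $\delta=0.115\cdot\mathbbm{1}_{a_1+a_2\le 0.665}$, and let $w=\bigl(0,\tfrac14,\tfrac12,\tfrac34,1,1,1\bigr)$ denote the objective vector. By weak linear programming duality it suffices to produce a real number $\lambda$ and nonnegative reals $\alpha,\beta,\mu,\eta$ --- multipliers for, respectively, the constraint $\sum_i x_i=1$, the constraint $x_3+x_4+x_5\le\tfrac12$, the constraint $x_4+x_5+x_6\le\tfrac12$, the second--moment constraint, and the constraint $x_1\ge\delta$ --- satisfying
\[
w_i\ \le\ \lambda+\alpha\,\mathbbm{1}_{i\in\{3,4,5\}}+\beta\,\mathbbm{1}_{i\in\{4,5,6\}}+\mu c_i-\eta\,\mathbbm{1}_{i=1}\qquad(1\le i\le 7).
\]
Indeed, any such certificate gives $L(a_1,a_2)\le \lambda+\tfrac{\alpha+\beta}{2}+\mu M-\eta\delta$, and it then only remains to check that this quantity is at most $0.54$.

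I would read off a suitable certificate from complementary slackness: in the extremal primal point all the mass lies on $x_1$ (pinned to its lower bound $\delta$), on $x_2$, on $x_4$, and on exactly one of $x_5$ and $x_7$, with both $\le\tfrac12$ constraints and the second--moment constraint tight. Accordingly I would split the parameter region $\{0.25\le a_2\le a_1\le 0.49\}$ along the curve $c_7-c_2=3(c_5-c_4)$. On both pieces I set $\lambda=\tfrac14-\mu c_2$, $\eta=\lambda$ and $\alpha+\beta=\tfrac34-\lambda-\mu c_4=\tfrac12-\mu(c_4-c_2)$, which makes the inequalities with $i=1,2,4$ tight; the remaining freedom is the value of $\mu$ and the division of $\alpha+\beta$ between $\alpha$ and $\beta$. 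Where $c_7-c_2\le 3(c_5-c_4)$ I take $\mu=\tfrac{3}{4(c_7-c_2)}$, which makes the $i=7$ inequality tight too; where $c_7-c_2\ge 3(c_5-c_4)$ I take $\mu=\tfrac{1}{4(c_5-c_4)}$, which makes the $i=5$ inequality tight. Finally I choose the split so as to meet the last two inequalities, namely $\alpha\ge\tfrac12-\lambda-\mu c_3$ (coming from $i=3$) and $\beta\ge 1-\lambda-\mu c_6$ (coming from $i=6$).

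The verification is then routine. On each piece the defining condition of that piece makes the single ``unused'' inequality --- the one with $i=5$ on the first piece and $i=7$ on the second --- hold automatically, because on each piece it reduces precisely to that piece's defining inequality. Using the factorisations $c_5-c_4=4a_2(1+a_1)$, $c_4-c_3=4(a_1-a_2)$, $c_3-c_2=4a_2(1-a_1)$ and $c_4-c_2=4a_1(1-a_2)$, one checks that $\eta=\lambda\ge 0$ (equivalently $\mu c_2\le\tfrac14$), that a valid split with $\alpha,\beta\ge 0$ exists (this reduces to a couple of elementary inequalities, for instance $\mu(c_4-c_3)\le\tfrac14$, i.e.\ $a_1\le a_2(2+a_1)$), and, finally, that $\lambda+\tfrac{\alpha+\beta}{2}+\mu M-\eta\delta\le 0.54$. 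After substituting the formulas for $\lambda$, $\alpha+\beta$ and $\eta$ the last step becomes a single polynomial inequality in $a_1,a_2$ --- of slightly different shape according to whether or not $a_1+a_2\le 0.665$ --- which one verifies directly over the relevant compact region.

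The genuine difficulty is uniformity rather than any individual computation: the true optimum of the program climbs to roughly $0.527$ as $a_1=a_2\to 0.25$, leaving only about $0.013$ of slack below $0.54$, so the concluding polynomial inequalities have to be handled with some care (and, should the two--region split not quite suffice near that corner, refined a little further there). A more mechanical but less illuminating alternative would be to note that the feasible polytope has only a bounded number of vertices, express each vertex explicitly as a function of $a_1$ and $a_2$, and bound the objective at every vertex.
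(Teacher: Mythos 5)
Your weak-duality set-up is fine, and on each of your two pieces the chosen $\mu$ does make the ``unused'' row ($i=5$ or $i=7$) follow from the piece's defining inequality. The gap is in the feasibility of the split of $\alpha+\beta$: you fix $\alpha+\beta=\tfrac12-\mu(c_4-c_2)$ and only verify the row-$3$ requirement (your condition $\mu(c_4-c_3)\le\tfrac14$), but the row-$6$ requirement $\beta\ge\tfrac34-\mu(c_6-c_2)$ is not automatically vacuous, and it breaks the certificate on part of the region. Concretely, at $(a_1,a_2)=(0.49,0.25)$ one has $c_7-c_2\ge 3(c_5-c_4)$, so your $\mu=\tfrac{1}{4(c_5-c_4)}=\tfrac{1}{16a_2(1+a_1)}\approx 0.168$; then $\tfrac14-\mu(c_3-c_2)\approx 0.164$ and $\tfrac34-\mu(c_6-c_2)\approx 0.230$, whose sum $\approx 0.394$ exceeds $\tfrac12-\mu(c_4-c_2)\approx 0.253$, so no admissible $(\alpha,\beta)$ exists with your prescribed $\lambda,\mu,\alpha+\beta$; and if you restore feasibility by enlarging $\alpha+\beta$ to $\approx 0.394$ (making row $4$ slack), the resulting bound $\lambda+\tfrac{\alpha+\beta}{2}+\mu M\approx 0.553$ overshoots $0.54$. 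The underlying problem is that your complementary-slackness ansatz (mass on $x_1,x_2,x_4$ and one of $x_5,x_7$) is wrong where $a_1-a_2$ is large and $a_2$ is small: there the optimum puts mass on $x_2,x_3,x_4$ (the LP value is only about $0.47$), so the optimal dual has row $3$ tight, not row $5$ or $7$, and needs a different $\mu$. Thus at least one further piece with a different basis and multipliers is required, each with its own polynomial verification; moreover those concluding verifications are only asserted, not carried out (and your worry is misplaced: the corner $a_1=a_2=0.25$ is fine, with value $\approx 0.527$, while the actual trouble spot is near $(0.49,0.25)$).

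For contrast, the paper does not attempt a closed-form certificate in $(a_1,a_2)$ at all: it rounds $(a_1,a_2)$ to the nearest hundredth, relaxes every coefficient by a margin $e=0.005$ so that the rounded LP dominates $L(a_1,a_2)$, and then checks the finitely many grid LPs by computer. Your duality route could in principle yield a computer-free proof, but only after correcting the basis analysis above (adding the missing piece(s)) and actually establishing the resulting polynomial inequalities on each piece.
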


\begin{proof}
While we could solve this linear program problem directly, we will instead reduce it to a finite number of cases as follows. Set the margin of error $e=0.005$ and for parameters $a_1',a_2' \in \frac{1}{100} \mathbb{Z}$ of our choice consider the output $L'(a_1',a_2')$ of the following linear program.
\begin{center}
   $L'(a_{1}',a_{2}') := \max \big\{\frac{1}{4}x_{2}+\frac{1}{2}x_{3}+ \frac{3}{4}x_{4}+x_{5}+x_{6}+x_{7}$ subject to
 \begin{gather*}
 x_1,..., x_7 \geq 0 \\
 x_{1}+x_{2}+x_{3}+x_{4}+x_{5}+x_{6}+x_{7}=1\\
 x_{3}+x_{4}+x_{5} \leq \frac{1}{2} \\
 x_{4}+x_{5}+x_{6} \leq \frac{1}{2} \\
(1-a'_{1}-a'_{2}-2e)^{2} x_{2} + (1-a'_{1}+a'_{2}-2e)^{2} x_{3} +(1+a'_{1}-a'_{2}-2e)^{2} x_{4} \\
+ (1+a'_{1}+a'_{2}-2e)^{2}x_{5}   +(3-3a'_{1}+a'_{2}-4e)^{2} x_{6}\\
+(3+3a'_{1}-5a'_{2}-8e)^{2} x_{7} \leq 1-(a'_{1}-e)^{2}-(a'_{2}-e)^{2} \\
x_{1} \geq 0.115 \cdot \mathbbm{1}_{a'_{1}+a'_{2}+2e \leq 0.665} \}.
 \end{gather*}
  \end{center}
  
\begin{Obs}\label{lp1}
If we set $a_1'$ ($a_2'$ resp.) to be $a_1$ ($a_2$ resp.) rounded to the nearest one hundredth then we have $L'(a_1',a_2') \geq L(a_1,a_2)$, as every individual constraint in the linear program $L'$ is at most as strict as its counterpart in the linear program $L$. Given the constraint $0.49 \geq a_{1} \geq a_{2} \geq 0.25$, we deduce the constraint  $0.49 \geq a_1'\geq a_2'\geq 0.25$.
\end{Obs}

A simple computer check shows that for all parameters $a_1',a_2' \in \frac{1}{100}\mathbb{Z}$ that satisfy $0.49 \geq a_1'\geq a_2'\geq 0.25$ we have $L'(a_1',a_2') \leq 0.54$. Using \Cref{lp1} we conclude that $L(a_1,a_2) \leq 0.54$ as desired.
\end{proof}

We conclude this section with the proof of the main proposition. 

\begin{proof}[Proof of \Cref{bigone}]
By \Cref{fullprob}, \Cref{secmom}, \Cref{345}, \Cref{456}, and \Cref{p1bound}, the parameters $p_i$  satisfy the constraints in \Cref{linprog}, so that the set of $\textbf{x}$'s over which $L(a_1,a_2)$ is maximized includes $\textbf{p}$. Finally, by \Cref{expression} and \Cref{linprog}, we conclude that $$\mathbb{P}\left(\left|\sum_{i=1}^{n} \epsilon_{i} a_{i}\right| > 1 \right) =\frac{1}{4}p_{2}+\frac{1}{2}p_{3}+ \frac{3}{4}p_{4}+p_{5}+p_{6}+p_{7} \leq L(a_1,a_2) \leq 0.54 .$$
\end{proof}

\section{Second intermediate case - $0.49 \leq a_{1} \leq 0.67$}

In this section, we solve the last case we have not tackled yet.

\begin{Prop}\label{bigtwo}
If $0.49 \leq a_{1} \leq 0.67$ then $\mathbb{P}(\mathbf{a})\ge 0.46$.
\end{Prop}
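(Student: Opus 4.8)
\textbf{Proof proposal for \Cref{bigtwo}.}

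The plan is to mimic the structure of the proof of \Cref{bigone}, but with a partition adapted to the regime $0.49 \le a_1 \le 0.67$. Write $S = \sum_{i=3}^n \epsilon_i a_i$ and split the probability space according to which interval $|S|$ falls into; the interval endpoints should be chosen (as multiples of $1 \pm a_1 \pm a_2$ and larger ``escape'' thresholds like $3 - ca_1 + \dots$) so that conditioned on $|S| \in I_j$ the quantity $\PP(|\epsilon_1 a_1 + \epsilon_2 a_2 + S| > 1 \mid |S| \in I_j)$ takes one of the values $0, \tfrac14, \tfrac12, \tfrac34, 1$ depending only on $j$, exactly as in \eqref{expression}. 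This expresses $\PP(|\sum \epsilon_i a_i| > 1)$ as a fixed linear combination of the $p_j := \PP(|S| \in I_j)$. The difference from the first intermediate case is that here $a_1$ can be close to $0.67$, so $a_1 + a_2$ may exceed $1$; this changes which low-index intervals contribute and forces a slightly different bookkeeping, and one likely needs to also use the constraint $a_2 \le a_1$ together with $a_2^2 \le 1 - a_1^2$ to control $a_2$ from above.

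Next I would collect linear constraints on the $p_j$. The trivial one is $\sum_j p_j = 1$. The second-moment constraint $\mathbb{E}(S^2) = 1 - a_1^2 - a_2^2 \ge \sum_j p_j (\inf I_j)^2$ is obtained exactly as in \eqref{secmom}. Then, as in \Cref{345} and \Cref{456}, I would run a reflection/mirroring argument: define the random walk $(S_t)_{t \ge 3}$, a stopping time $T$ at the first time $|S_t|$ exceeds a carefully chosen threshold, and the reflected walk $U_t = 2 S_{T} - S_t$ for $t > T$; since $U_n \stackrel{d}{=} S$, and since a band of width $2(\text{threshold}')$ around the reflection point cannot contain both $|S_n|$ and $|U_n|$ when the band is short enough, one gets inequalities of the form $p_{j} + p_{j+1} + \dots \le \tfrac12$ for appropriate consecutive blocks. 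I would also want a lower bound on $p_1$ (the analogue of \Cref{p1bound}), using the induction hypothesis on a truncated vector exactly as there, valid on the subrange where $1 - a_1 - a_2$ is not too small; this may require a separate sub-case split since with $a_1$ near $0.67$ the interval $I_1$ can degenerate, in which case a direct mirroring argument (as in \Cref{smalla} or \Cref{abig}, peeling off $a_1$) should close the gap.

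Finally, I would assemble these into a linear program in $x_1, \dots, x_k \ge 0$ with the parameters $a_1, a_2$ appearing in the coefficients, and show its optimum is $\le 0.54$ for all admissible $(a_1, a_2)$ with $0.49 \le a_1 \le 0.67$, $0 < a_2 \le a_1$, $a_1^2 + a_2^2 \le 1$. As in \Cref{linprog} and \Cref{lp1}, rather than solving the parametric LP exactly I would discretize: round $a_1, a_2$ to the nearest hundredth, relax every constraint by the corresponding margin of error $e = 0.005$ (loosening each squared coefficient and the right-hand side in the safe direction), and verify by a finite computer check over the grid that the relaxed LP has value $\le 0.54$; monotonicity of each constraint in the rounding error gives the bound for all real $(a_1, a_2)$ in range. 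Then $\PP(\mathbf{a}) = 1 - \PP(|\sum \epsilon_i a_i| > 1) \ge 1 - 0.54 = 0.46$.

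The main obstacle I anticipate is the design of the interval partition and the stopping-time thresholds: they must be mutually compatible so that (i) the conditional probabilities collapse to multiples of $\tfrac14$ across the \emph{entire} parameter range $[0.49, 0.67] \times (0, a_1]$ without the interval order changing, and (ii) the resulting LP is actually tight enough to beat $0.54$. Getting a single partition to work uniformly may be impossible, in which case the real work is splitting $[0.49, 0.67]$ into two or three sub-intervals of $a_1$ (and possibly sub-casing on $a_1 + a_2 \lessgtr 1$), each with its own tailored partition and LP — the bookkeeping of verifying the interval orderings and conditional-probability values in each sub-case is where the proof will be most delicate.
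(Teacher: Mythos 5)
There is a genuine gap: your plan defers exactly the sub-case that is the real difficulty here, and the constraints you propose are not enough to close it. In the range $0.49\le a_1\le 0.67$ the second coordinate can be \emph{big} in the sense $a_1+a_2>1$ (e.g.\ $a_1=a_2=0.55$), and then conditioning on $(\epsilon_1,\epsilon_2)$ makes the innermost interval $[0,a_1+a_2-1)$ contribute conditional probability $\tfrac12$ rather than $0$. This is fatal for the LP as you describe it: with $a_1=a_2=0.55$ the relevant breakpoints are $0.1,\ 1,\ 2.1$ with conditional values $\tfrac12,\tfrac14,\tfrac34,1$, the second-moment budget is $1-a_1^2-a_2^2=0.395$, and the mirroring-block constraints (which can only bound blocks of intervals beyond a barrier, never the central interval containing $0$) do not prevent the allocation $p_1\approx 0.605$, $p_3\approx 0.395$, giving LP value about $0.5\cdot 0.605+0.75\cdot 0.395\approx 0.60>0.54$. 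Moreover the analogue of \Cref{p1bound} is unavailable, since its proof needs $a_3\ge 0.25$, which is not guaranteed in this regime. So "slightly different bookkeeping'' after sub-casing on $a_1+a_2\lessgtr 1$ does not suffice; a new mechanism is needed precisely where you flag the proof as "delicate''.

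The paper's proof supplies two ideas you are missing. First, instead of always peeling off $a_2$, it peels off $a_1$ together with a \emph{small} term $a_j\ge 0.25$ (small meaning $a_1+a_j\le 1$) whenever one exists (\Cref{anysmall}); the Observation following that lemma is exactly the point that smallness of the peeled term is what keeps the innermost interval at conditional probability $0$, and then a five-interval LP with only the normalization and second-moment constraints (no mirroring constraints, no $p_1$ lower bound, no induction) already gives $\le 0.54$ after the same rounding/discretization you describe. Second, when no small term of size $\ge 0.25$ exists, the paper abandons the LP altogether and runs a reflection argument on the walk that takes $\epsilon_1a_1$ first and then adds the remaining terms smallest-first, with the moving barrier $1-a_{n-t+1}$; the structural \Cref{sumlem} ($a_2+\dots+a_{l-1}+2a_l\le 2$ for big terms, via Cauchy--Schwarz) rules out a bad stopping among the big terms, and \Cref{auxlem} bounds the probability that the walk or its mirror exceeds $2.5$, giving $\PP(\mathbf{a})\ge\frac12(1-0.08)=0.46$ directly. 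Without these (or equivalent) ideas, the approach you outline does not go through on the sub-range where $a_2$ is big or where all terms besides $a_1$ are smaller than $0.25$ yet some exceed $1-a_1$.
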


We shall follow a similar strategy to the previous section employing the same set of techniques. However, in this section we shall use the linear program only to further reduce the range of vectors $\mathbf{a}$ we are examining. We conclude the remaining cases using additional analytic arguments.

Assume throughout this section that $0.49 \leq a_{1} \leq 0.67$. For $i>1$, we call the term $ a_{i} $ \emph{big} if $ a_{1}+a_{i} > 1 $, and we call it \emph{small} otherwise. 

\begin{lem}\label{anysmall} If we have any small term $ a_{j} $ such that $ a_{j} \geq 0.25 $, then $\mathbb{P}\left(\left|\sum_{i=1}^{n} \epsilon_{i} a_{i}\right| > 1 \right)\leq 0.54$.
\end{lem}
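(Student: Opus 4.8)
\textbf{Proof proposal for Lemma~\ref{anysmall}.}

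The plan is to reprise the interval/linear-program machinery of the previous section, but adapted to the situation at hand: we have a distinguished big term $a_1 \in [0.49, 0.67]$ and a small term $a_j \geq 0.25$ with $a_1 + a_j \leq 1$. First I would relabel so that this small term plays a role analogous to $a_2$ in the previous section — concretely, set $b = a_j$ and let $S = \sum_{i \neq 1, j} \epsilon_i a_i$ be the signed sum over all the remaining terms. Since $a_j$ is small we have $a_1 + b \leq 1$, and since $a_j \geq 0.25$ while $a_1 \leq 0.67$ we also control $a_1 - b \leq 0.42$ and $a_1 + b \in [0.74, 1]$. These are exactly the kind of inequalities that made the seven-interval partition work before, so I would partition the positive half-line into intervals whose endpoints are built from $1 \pm a_1 \pm b$ (and a couple of larger thresholds near $3$), compute the conditional probability that $|\epsilon_1 a_1 + \epsilon_j b + S| > 1$ on each interval (these will again be $0, \tfrac14, \tfrac12, \tfrac34, 1, 1, 1$), and thereby write $\mathbb{P}(|\sum \epsilon_i a_i| > 1)$ as a linear functional of the interval-probabilities $p_1, \dots, p_7$.

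Next I would harvest the same three families of constraints: (i) $\sum p_i = 1$; (ii) the second-moment inequality $\mathbb{E}(S^2) = 1 - a_1^2 - b^2 \geq \sum_i p_i (\inf I_i)^2$; and (iii) two "at most $\tfrac12$" constraints of the form $p_3 + p_4 + p_5 \leq \tfrac12$ and $p_4 + p_5 + p_6 \leq \tfrac12$, each proved by the stopping-time-plus-reflection argument of Lemmas~\ref{345} and~\ref{456} (stop the walk $(S_t)$ when $|S_t|$ first exceeds the appropriate threshold just below the relevant interval, reflect the remainder, and observe that $|S_n|$ and its reflection cannot both land in an interval of length equal to twice that threshold gap). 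I would then feed these constraints into a linear program with objective $\tfrac14 x_2 + \tfrac12 x_3 + \tfrac34 x_4 + x_5 + x_6 + x_7$, exactly as in Lemma~\ref{linprog}, and reduce to a finite computer check over $a_1', b' \in \tfrac1{100}\mathbb{Z}$ with a safety margin $e = 0.005$, taking care that the rounding relaxes rather than tightens every constraint. The upshot would be $L(a_1, b) \leq 0.54$, hence $\mathbb{P}(|\sum \epsilon_i a_i| > 1) \leq 0.54$.

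The main obstacle I anticipate is getting the interval endpoints right so that the conditional probabilities are genuinely the clean values $0, \tfrac14, \dots, 1$ across the entire parameter range $a_1 \in [0.49, 0.67]$, $b \in [0.25, 1 - a_1]$: unlike the previous section, here $a_1$ and $b$ can be quite unbalanced (e.g. $a_1$ near $0.67$ forces $b \leq 0.33$, but $a_1$ near $0.5$ allows $b$ up to $0.5$), so the ordering of the four points $1 - a_1 - b < 1 - a_1 + b$ versus $1 + a_1 - b < 1 + a_1 + b$ and in particular whether $1 - a_1 + b \leq 1 + a_1 - b$ (equivalently $b \leq a_1$, which does hold since $a_j \leq a_1$) must be checked, and one must also verify that the larger thresholds used for the $p_3+p_4+p_5$ and $p_4+p_5+p_6$ constraints and for the "tail" intervals $I_6, I_7$ are placed so that the reflection arguments and the second-moment bound remain valid. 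A secondary point is ensuring the finite LP check actually returns a value $\leq 0.54$ everywhere on the rounded grid; if it does not at a few boundary cells, one would need to either refine the grid there or supplement with a direct estimate, but given the slack between $0.54$ and the true optimum (around $0.535$ in the previous case) this should go through.
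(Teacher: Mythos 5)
Your overall architecture (partition $|S|$ into intervals built from $1\pm a_1\pm a_j$, compute the conditional probabilities $0,\tfrac14,\tfrac12,\tfrac34,1$, add the second-moment constraint, and finish with a rounded finite LP check) is exactly the paper's route, but the two extra ingredients you import from the previous section create a genuine gap. First, the reflection constraints $p_3+p_4+p_5\le\tfrac12$ and $p_4+p_5+p_6\le\tfrac12$ cannot be proved ``exactly as in'' \Cref{345} and \Cref{456}: those proofs crucially use that every step of the residual walk is at most $a_2$, the smaller of the two distinguished terms, so the overshoot at the stopping time is at most $a_2$ and $|S_{T_1}|\le 1-a_1+a_2$. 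Here the residual sum $\sum_{i\neq 1,j}\epsilon_i a_i$ contains all the terms $a_2,\dots$ other than $a_j$, and in the regime $0.49\le a_1\le 0.67$ these can include big terms larger than $1-a_1$ and much larger than $a_j$ (e.g.\ $a_1=0.65$, $a_2=0.62$, $a_j=0.25$); already the first step lands at $a_2>1-a_1+a_j$, so the overshoot bound and hence the claimed containment fail, and your proof of the two ``$\le\tfrac12$'' constraints collapses. Since the LP argument only bounds $\mathbb{P}(|\sum\epsilon_ia_i|>1)$ if the true vector $(p_i)$ is feasible, feeding in constraints you cannot prove invalidates the conclusion, even though the numerical check would happily pass. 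Second, the outer intervals ``near $3$'' do not even make sense in this range: the endpoint $3-3a_1+a_j$ from the previous section satisfies $3-3a_1+a_j<1+a_1+a_j$ as soon as $a_1>\tfrac12$, so the seven-interval partition is ill-ordered on most of $[0.49,0.67]$.

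The fix is to do less, which is what the paper does: keep only the five intervals $[0,1-a_1-a_j]$, $(1-a_1-a_j,1-a_1+a_j]$, $(1-a_1+a_j,1+a_1-a_j]$, $(1+a_1-a_j,1+a_1+a_j]$, $(1+a_1+a_j,\infty)$, and use only the two constraints $\sum p_i=1$ and the second-moment inequality $ (1-a_1-a_j)^2p_2+(1-a_1+a_j)^2p_3+(1+a_1-a_j)^2p_4+(1+a_1+a_j)^2p_5\le 1-a_1^2-a_j^2$. The resulting two-constraint LP, after the same rounding to $\tfrac1{100}\mathbb{Z}$ with margin $e=0.005$ (taking care to truncate $1-a_1'-a_j'-2e$ at $0$ when it goes negative), already gives the bound $0.54$ over the whole range $0.49\le a_1'\le 0.67$, $0.25\le a_j'\le\min\{1.01-a_1',a_1'\}$. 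So your plan becomes correct precisely by deleting the reflection constraints and the intervals $I_6,I_7$; as proposed, it relies on steps that fail in this regime.
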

\begin{proof}
Assume we have such an $a_{j}$. Let $U=\sum_{\substack{2\le i \le n \\ i\neq j}} \epsilon_{i} a_{i}$, i.e. the sum of all the signed terms except $a_{1}$ and $a_{j}$. Consider the following five intervals which partition the positive half-line in this order: $I_{1}= [ 0,1-a_{1}-a_{j} ], I_{2}= (1-a_{1}-a_{j},1-a_{1}+a_{j} ], I_{3}= ( 1-a_{1}+a_{j},1+a_{1}-a_{j} ], I_{4}= (1+a_{1}-a_{j},1+a_{1}+a_{j} ], I_{5}= (1+a_{1}+a_{j}, \infty)  $. For $i=1,...,5$, write $p_{i}= \mathbb{P}(|U| \in I_{i})$, so that, analogous to \Cref{expression}, we may write
\begin{equation}\label{expression2}
\mathbb{P}\left(\left|\sum_{i=1}^{n} \epsilon_{i} a_{i}\right| > 1 \right) = 	\frac{1}{4}p_{2}+\frac{1}{2}p_{3}+ \frac{3}{4}p_{4}+p_{5}.
\end{equation}

Analogous to the previous section we get (after noticing the events $\{|U| \in I_i\}$ form a partition of our probability space and after computing the second moment) 
\begin{align}\label{sum2}
    1&=p_1+p_2+p_3+p_4+p_5\\
  \label{secmom2}  1-a_{1}^{2}-a_{j}^{2}&\geq  (1-a_{1}-a_{j})^{2} p_{2} + (1-a_{1}+a_{j})^{2} p_{3}\\
    &\ \ \ \ \ + (1+a_{1}-a_{j})^{2} p_{4} 
+ (1+a_{1}+a_{j})^{2} p_{5}  \nonumber
\end{align}

\begin{clm}\label{linprog2}
For any parameters $a_{1},a_{j}$ such that $0.49 \leq a_{1} \leq 0.67 $ and $0.25\leq a_j\leq 1-a_1$, the output $M(a_{1},a_{j})$ of the following linear program satisfies $M(a_{1},a_{j}) \leq 0.54$.
\end{clm}
\begin{center}
  $M(a_{1},a_{j}) :=\max\{\frac{1}{4}x_{2}+\frac{1}{2}x_{3}+ \frac{3}{4}x_{4}+x_{5}$ subject to
 \begin{gather*}
 x_1,..., x_5 \geq 0 \\
 x_1+x_{2}+x_{3}+x_{4}+x_{5}=1\\
(1-a_{1}-a_{j})^{2} x_{2} + (1-a_{1}+a_{j})^{2} x_{3}+ (1+a_{1}-a_{j})^{2} x_{4}  
+ (1+a_{1}+a_{j})^{2} x_{5}  \leq 1-a_{1}^{2}-a_{j}^{2} \}\\
 \end{gather*}
\end{center}
\begin{proof}
While we could solve this linear program problem directly, we will instead reduce it to a finite number of cases as follows. Set the margin of error $e=0.005$ and for parameters $a_1',a_j' \in \frac{1}{100} \mathbb{Z}$ of our choice consider the output $M'(a_1',a_j')$ of the following linear program.

\begin{center}
  $M'(a_1',a_j'):=\max\{\frac{1}{4}x_{2}+\frac{1}{2}x_{3}+ \frac{3}{4}x_{4}+x_{5}$ subject to
 \begin{gather*}
 x_1,..., x_5 \geq 0 \\
 x_{1}+x_{2}+x_{3}+x_{4}+x_{5}=1\\
g(a'_{1},a'_{j},e)^{2} x_{2} + (1-a'_{1}+a'_{j}-2e)^{2} x_{3}+ \\ +(1+a'_{1}-a'_{j}-2e)^{2} x_{4}  
+ (1+a'_{1}+a'_{j}-2e)^{2} x_{5}  \leq 1-(a'_{1}-e)^{2}-(a'_{j}-e)^{2} \}\\
 \end{gather*}
\end{center}
  
where $g(a'_{1},a'_{j},e)=1-a'_{1}-a'_{j}-2e$ if $1-a'_{1}-a'_{j}-2e>0$, and $g(a'_{1},a'_{j},e)=0$ otherwise.

\begin{Obs}\label{lp2}
If we set $a_1'$ ($a_j'$ resp.) to be $a_1$ ($a_j$ resp.) rounded to the nearest one hundredth then we have $M'(a_1',a_j') \geq M(a_1,a_j)$, as every individual constraint in the linear program $M'$ is at most as strict as its counterpart in the linear program $M$. Given the constraint $0.49 \leq a_{1} \leq 0.67 $ and $0.25\leq a_j\leq \min\{1-a_1,a_1\}$, we deduce the constraint  $0.49 \leq a_1'\leq 0.67$ and $0.25\leq a_j'\leq \min\{1.01-a_1',a_1'\}$.
\end{Obs}

A simple computer check shows that for all parameters $a_1',a_j' \in \frac{1}{100}\mathbb{Z}$ that satisfy $0.49 \leq a_1'\leq 0.67$ and $0.25\leq a_j'\leq \min\{1.01-a_1',a_1'\}$ we have $M'(a_1',a_j') \leq 0.54$. Using \Cref{lp2}, we conclude that $M(a_1,a_j) \leq 0.54$ as desired.


\end{proof}

We return to the proof of the lemma. By \Cref{sum2}, and \Cref{secmom2}, the parameters $p_i$  satisfy the constraints in \Cref{linprog2}, so that the set of $\textbf{x}$'s over which $M(a_1,a_j)$ is maximized includes $\textbf{p}$. Finally, by \Cref{expression2} and \Cref{linprog2}, we conclude that $$\mathbb{P}\left(\left|\sum_{i=1}^{n} \epsilon_{i} a_{i}\right| > 1 \right) =\frac{1}{4}p_{2}+\frac{1}{2}p_{3}+ \frac{3}{4}p_{4}+p_{5} \leq M(a_1,a_j) \leq 0.54 .$$

\end{proof}

\begin{Obs}
It was crucial that $a_{j}$ was a small term. If it was big instead, for the interval 
$I_1'=[0,a_{1}+a_{j}-1 )$ around the origin, we have

\begin{equation*}
\PP\left(\left|\sum_{i=1}^{n} \epsilon_{i} a_{i}\right| > 1 \   \Big|\ |U|\in I_1' \right)=\frac12.
\end{equation*}
This is in contrast with $\PP(|\sum_i \epsilon_ia_i|>1 \,\, \big| \,\, |U|\in I_1)=0$, which we used in the proof of \Cref{anysmall}.
\end{Obs}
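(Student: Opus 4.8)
The plan is to reduce the claim to a finite sign count by conditioning on the value of $U$. Since $U=\sum_{2\le i\le n,\ i\ne j}\epsilon_i a_i$ depends only on the Rademacher variables $\epsilon_i$ with $i\notin\{1,j\}$, the pair $(\epsilon_1,\epsilon_j)$ is independent of $U$; consequently, conditionally on the event $\{|U|\in I_1'\}$ the four outcomes $(\epsilon_1,\epsilon_j)\in\{\pm1\}^2$ remain equiprobable. It therefore suffices to fix an arbitrary real $u$ with $|u|<a_1+a_j-1$ and to check that exactly two of the four values $\epsilon_1 a_1+\epsilon_j a_j+u$ have absolute value $>1$; averaging over the four equiprobable sign patterns then gives the value $\tfrac12$.

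First I would handle the two \emph{aligned} patterns $\epsilon_1=\epsilon_j$. Using $u>-(a_1+a_j-1)$ one gets $a_1+a_j+u>1$, and using $u<a_1+a_j-1$ one gets $-a_1-a_j+u<-1$; hence both of these patterns contribute a sum of absolute value exceeding $1$. Next I would handle the two \emph{opposite} patterns $\epsilon_1\ne\epsilon_j$, where the sum equals $\pm(a_1-a_j)+u$. Here $a_1-a_j\ge 0$ (because $j\ge 2$ and $a_1\ge a_2\ge\cdots$), so $|\pm(a_1-a_j)+u|\le (a_1-a_j)+|u|<(a_1-a_j)+(a_1+a_j-1)=2a_1-1\le 2\cdot 0.67-1<1$, using the section hypothesis $a_1\le 0.67$; hence neither of these patterns contributes. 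Summing the conditional probabilities $\tfrac14$ of the two contributing patterns yields the asserted $\tfrac12$.

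I do not expect a genuine obstacle: the statement is an exact case analysis, and the only mild points of care are (i) making the conditioning precise via the independence of $(\epsilon_1,\epsilon_j)$ from $U$, (ii) keeping all inequalities strict, which is legitimate since $I_1'=[0,a_1+a_j-1)$ is half-open, and (iii) observing that $I_1'$ is non-empty precisely because $a_j$ is big, so the conditioning event is meaningful (one may assume $\mathbb{P}(|U|\in I_1')>0$, else the statement is vacuous). For completeness one can also record the contrasting identity $\mathbb{P}\big(|\sum_i\epsilon_i a_i|>1\ \big|\ |U|\in I_1\big)=0$ used in the proof of \Cref{anysmall}: when $a_j$ is small, every $u$ with $|u|\le 1-a_1-a_j$ satisfies $|\epsilon_1 a_1+\epsilon_j a_j+u|\le a_1+a_j+(1-a_1-a_j)=1$ for all four sign patterns, so the contribution of $I_1$ is genuinely $0$, whereas for a big term the best one can say about $I_1'$ is the bound $\tfrac12$ above.
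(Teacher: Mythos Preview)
Your argument is correct. The paper states this as an Observation without proof, so there is no ``paper's own proof'' to compare against; your write-up simply supplies the routine verification the authors left implicit. The independence of $(\epsilon_1,\epsilon_j)$ from $U$, the case split into aligned versus opposite sign patterns, and the use of $a_1\le 0.67$ to bound $|{\pm(a_1-a_j)+u}|<2a_1-1<1$ are all exactly the checks one needs, and you have handled the strictness of the interval endpoint carefully.
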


Henceforth we shall assume that there exist no small terms of size at least $0.25$ and we shall use a mirroring argument similar to the one we used in Section 3 to conclude. Let $k$ be such that the terms $a_{2},...,a_{k}$ are big and the terms $a_{k+1},...,a_{n}$ are small. We will need the following easy lemma.

\begin{lem}\label{sumlem}
 If $2 \leq l \leq k$, then we have $a_{2}+a_{3}+...+a_{l-1}+2a_{l} \leq 2$.
\end{lem}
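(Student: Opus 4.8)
The plan is to exploit the definition of "big" together with the unit-norm constraint $\sum_i a_i^2 = 1$. Recall that a term $a_i$ with $i>1$ is big precisely when $a_1 + a_i > 1$, so each of $a_2,\dots,a_k$ satisfies $a_i > 1-a_1$. Since $a_1 \le 0.67$, this forces every big term to be reasonably large, namely $a_i > 0.33$; in particular $a_i^2 > (1-a_1)^2$ for $2\le i\le k$. The first step is therefore to bound $k$: from $1 = \sum_i a_i^2 \ge a_1^2 + \sum_{i=2}^{k} a_i^2 > a_1^2 + (k-1)(1-a_1)^2$, and since $a_1 \ge 0.49$ we get $(k-1)(1-a_1)^2 < 1 - a_1^2 = (1-a_1)(1+a_1)$, hence $k - 1 < \frac{1+a_1}{1-a_1}$. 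With $a_1$ ranging in $[0.49,0.67]$ this ratio is less than $\frac{1.67}{0.33} < 5.07$, so $k-1 \le 5$, i.e. $k \le 6$; actually for the lemma we will only need the crude fact that there are not too many big terms.

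The second step is the core estimate. Fix $2 \le l \le k$. Because the $a_i$ are nonincreasing, $a_2 \ge a_3 \ge \dots \ge a_l$, so among the terms $a_2,\dots,a_{l-1},a_l,a_l$ the sum $a_2 + \dots + a_{l-1} + 2a_l$ has at most $l$ summands and each is at most $a_2$, but a bare count is too lossy; instead I would bound it against the $\ell_2$ norm. By Cauchy--Schwarz (or just $\sum x_i \le \sqrt{m\sum x_i^2}$ applied to the multiset $\{a_2,\dots,a_{l-1},a_l,a_l\}$ of size $l$), we have
\[
a_2 + a_3 + \dots + a_{l-1} + 2a_l \;\le\; \sqrt{l}\,\sqrt{a_2^2 + \dots + a_{l-1}^2 + 2a_l^2}\;\le\;\sqrt{l}\,\sqrt{\sum_{i=2}^{l} a_i^2 + a_l^2}.
\]
Now $\sum_{i=2}^{l} a_i^2 \le 1 - a_1^2 \le 1 - 0.49^2 < 0.76$ and $a_l^2 \le a_2^2 \le 1 - a_1^2 - a_l^2 \le \dots$; more simply $a_l \le a_1 \le 0.67$ gives $a_l^2 \le 0.45$, but that is too weak for large $l$. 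Here the bound on $l$ from step one is what saves us: since $l \le k \le 6$, we only need the inequality for $l \in \{2,3,4,5,6\}$, and in each of those cases $\sqrt{l}\sqrt{1-a_1^2+a_l^2}$ must be shown to be at most $2$, using $a_1 \ge 0.49$ and the fact that $a_l^2 \le (1-a_1^2)/(l-1)$ (as $a_l$ is the smallest of $l-1$ big terms whose squares sum to at most $1-a_1^2$). Plugging $a_l^2 \le (1-a_1^2)/(l-1)$ in gives $a_2+\dots+2a_l \le \sqrt{l}\sqrt{(1-a_1^2)\bigl(1 + \tfrac{1}{l-1}\bigr)} = \sqrt{l}\sqrt{1-a_1^2}\sqrt{\tfrac{l}{l-1}} = \frac{l}{\sqrt{l-1}}\sqrt{1-a_1^2} \le \frac{l}{\sqrt{l-1}}\sqrt{1-0.49^2}$, and one checks $\frac{l}{\sqrt{l-1}} \le \frac{6}{\sqrt5} < 2.69$ for $2\le l\le 6$, so the product is at most $2.69 \cdot 0.873 < 2.35$ — which is unfortunately slightly above $2$, so the crude Cauchy--Schwarz is not quite enough and will need to be tightened.

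The main obstacle, then, is sharpening this estimate at the top end: the worst case is $l=k$ with all big terms equal to $\sqrt{(1-a_1^2)/(k-1)}$ and $a_1$ at its minimum $0.49$. To close the gap I would avoid Cauchy--Schwarz and argue more directly: one of the $l$ summands in $a_2+\dots+a_{l-1}+2a_l$ is "$a_l$ counted twice", and $a_l$ is the smallest big term, so $2a_l \le a_2 + a_l$ wastes nothing — instead note $a_2+\dots+a_{l-1}+2a_l \le a_2 + \dots + a_{l-1} + a_l + a_l$ where this is a sum of $l$ terms each $\le a_2$, hence $\le l\,a_2$. Alternatively, and more promisingly, observe that if $l \ge 3$ then $a_2 + a_3 + \dots + a_{l-1} + 2a_l \le (a_2^2 + \dots + a_l^2)^{1/2}\cdot(\text{adjusted count})$ can be replaced by the observation that $a_3+\dots+a_{l-1}+2a_l \le a_2+\dots+a_{l-1}+a_l$ and separately control $a_1+a_2+\dots+a_l$ against $\sqrt{l}$. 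Given the constraints ($a_1\ge 0.49$, squares summing to $1$, at most $\le 6$ big terms, monotonicity), the cleanest route is almost certainly an explicit finite check: the bound on $k$ reduces the lemma to five inequalities in the single parameter $a_1\in[0.49,0.67]$, each of the form $\sum_{i=2}^{l-1}a_i + 2a_l \le 2$ under $\sum_{i=2}^{k}a_i^2 \le 1-a_1^2$ with $a_2\ge\dots\ge a_l$; maximizing the linear objective over that spherical shell (Lagrange multipliers, equal big terms) gives the worst case, and one verifies $2$ is not exceeded. I expect the author's proof to do exactly this — bound the number of big terms first, then dispatch the few remaining cases by a short direct computation — and the only real work is the bookkeeping to confirm the numbers come out below $2$ with the given ranges.
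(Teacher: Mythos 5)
Your core estimate is in fact the same as the paper's: Cauchy--Schwarz applied to the multiset $\{a_2,\dots,a_{l-1},a_l,a_l\}$ together with $a_l^2\le(1-a_1^2)/(l-1)$ gives $\tfrac{l}{\sqrt{l-1}}\sqrt{1-a_1^2}$, and the paper's split (bound $a_2+\dots+a_l$ by $\sqrt{(l-1)(1-a_1^2)}$ and the extra $a_l$ by $\sqrt{(1-a_1^2)/(l-1)}$) yields exactly the same quantity, since $\sqrt{l-1}+\tfrac1{\sqrt{l-1}}=\tfrac{l}{\sqrt{l-1}}$. The genuine gap is in what you do next: you evaluate this bound by taking the worst value of $l$ (namely $l\le 6$) and the worst value of $a_1$ (namely $0.49$) \emph{independently}, get $\approx 2.35>2$, and conclude the estimate ``is not quite enough.'' But those two extremes are incompatible: the same counting of big terms that gave you $k\le 6$ gives, for each fixed $l$, the coupling $l-1\le\frac{1+a_1}{1-a_1}$, i.e.\ $a_1\ge\frac{l-2}{l}$, so $l=6$ forces $a_1\ge\frac23$, not $0.49$. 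Used jointly, the bound closes with no slack: $\sqrt{l-1}+\tfrac1{\sqrt{l-1}}$ is increasing in $l-1\ge1$, so it is maximised at $l-1=\frac{1+a_1}{1-a_1}$, where $\sqrt{1-a_1^2}\bigl(\sqrt{\tfrac{1+a_1}{1-a_1}}+\sqrt{\tfrac{1-a_1}{1+a_1}}\bigr)=(1+a_1)+(1-a_1)=2$ exactly. That one line (monotonicity of $x+\tfrac1x$ plus the coupling) is the paper's entire finish, needs no case analysis, and does not even use $a_1\in[0.49,0.67]$.

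Because you dropped the coupling, your proposal then retreats to an unexecuted plan: the alternatives you float ($\le l\,a_2$, re-splitting the sum) are not developed, and the ``explicit finite check'' over $l\in\{2,\dots,6\}$ with Lagrange multipliers ends with ``one verifies $2$ is not exceeded,'' which is asserted rather than shown --- and that verification is precisely the content of the lemma. (For the record, your finite check would succeed if you reinstated the coupling: for each $l$, $\tfrac{l}{\sqrt{l-1}}\sqrt{1-a_1^2}\le 2$ given $a_1\ge\frac{l-2}{l}$, with equality at the endpoint; but decoupled, as in your draft, the $l=6$, $a_1=0.49$ ``worst case'' you computed does not exist.) So the missing idea is concrete: feed the constraint $l-1\le\frac{1+a_1}{1-a_1}$ back into the very bound you derived, rather than treating $l$ and $a_1$ as free parameters.
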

\begin{proof}
 Using the fact that $\sum_{i=2}^{l}a_{i}^{2} \leq 1-a_{1}^{2}$ and that $a_{l}$ is the smallest term out of $a_{2},...,a_{l}$, we get $a_{l} \leq \sqrt{\frac{1-a_{1}^{2}}{l-1}}$. Using Cauchy-Schwarz inequality, we get
\begin{align*}
a_{2}+a_{3}+...+a_{l-1}+2a_{l} &\leq (a_{2}+...+a_{l}) + \sqrt{\frac{1-a_{1}^{2}}{l-1}} \\
&\leq  \sqrt{(l-1)(a_{2}^{2}+...+a_{l}^{2})} + \sqrt{\frac{1-a_{1}^{2}}{l-1}} \\
&\leq \sqrt{l-1}\sqrt{1-a_{1}^{2}} + \sqrt{\frac{1-a_{1}^{2}}{l-1}}
\end{align*}

Next, note that as each big term is bigger than $1-a_{1}$, so
$$1-a_1^2\geq a_2^2+a_3^2+\dots+a_l^2\geq (l-1)(1-a_1)^2$$
and thus $l-1\leq \frac{1-a_{1}^{2}}{(1-a_{1})^{2}}=\frac{1+a_{1}}{1-a_{1}}$. 

Combining these two with the fact that the function $x+\frac{1}{x}$ is increasing on the interval $[1,\infty) $, we find

\begin{align*}
\sqrt{l-1}\sqrt{1-a_{1}^{2}} + \sqrt{\frac{1-a_{1}^{2}}{l-1}}  &\leq \sqrt{1-a_{1}^{2}} \left(\sqrt{\frac{1+a_{1}}{1-a_{1}}}+\sqrt{\frac{1-a_{1}}{1+a_{1}}}\right) \\
 &\leq (1+a_{1})+(1-a_{1}) =2
\end{align*}

This concludes the proof of the lemma.
\end{proof}

\begin{proof}[Proof of \Cref{bigtwo}] Define the following random process $ (A_{t})_{t=0}^{n} $. We set $ A_{0}=0 $, $A_{1}=\epsilon_{1} a_{1}$ and for $ n\geq t \geq 2 $, $A_{t}=\epsilon_{1} a_{1}+\sum_{i=n-t+2}^{n} \epsilon_{i} a_{i}$. Let
$$
T=\begin{cases}\text{inf}\{1 \leq t \leq n: |A_{t}|>1-a_{n-t+1}\} &\text{ if } \{1 \leq t \leq n: |A_{t}|>1-a_{n-t+1}\}  \neq \emptyset \text{, }\\
n+1 &\text{ otherwise}\end{cases}
$$
Then $T$ is a stopping time.
Note that if $T\leq n$, then $|A_T|\leq1$. Also define the random process $ (B_{t})_{t=0}^{n} $ by setting $B_{t}=A_{t}$ for $t \leq T$ and $ B_{t}=2A_{T}-A_{t} $ for $n\ge t>T$. Note that $ B_{n} $ has the same distribution as $A_n=\sum_{i=1}^{n} \epsilon_{i} a_{i}$. 

\begin{clm} If $|A_{n}|>1 \text{ and } |B_{n}|>1$, then $|A_{n}|>2.5$ or $|B_{n}|>2.5$.
\end{clm}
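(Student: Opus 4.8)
The plan is to run the same mirroring/sign argument as in \Cref{protomirror} and \Cref{345}, but now feeding the variable stopping threshold into \Cref{sumlem} at the end. Assume $|A_n|>1$ and $|B_n|>1$. First I would record the elementary consequences of the construction. Since $|A_n|>1>1-a_1$, the stopping time fires by time $n$, so $T\le n$ and hence $|A_T|\le 1$ (the remark stated just before the claim); moreover $T\le n-1$, because $T=n$ would give $|A_n|=|A_T|\le 1$. Set $l:=n-T+1$, so $2\le l\le n$; unravelling the definitions gives $A_T=\epsilon_1a_1+\sum_{i=l+1}^n\epsilon_ia_i$ and $A_n-A_T=\sum_{i=2}^{l}\epsilon_ia_i$, hence $A_n+B_n=2A_T$ and $A_n-B_n=2(A_n-A_T)$.

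Next comes the sign step. From $|A_n+B_n|=2|A_T|\le 2<|A_n|+|B_n|$ we get $|A_n+B_n|\neq|A_n|+|B_n|$, so $A_n$ and $B_n$ have opposite signs; thus $|A_n+B_n|=\big||A_n|-|B_n|\big|$ and $|A_n-B_n|=|A_n|+|B_n|$. Adding and subtracting these identities yields $\max(|A_n|,|B_n|)=|A_T|+|A_n-A_T|$ and $\min(|A_n|,|B_n|)=|A_n-A_T|-|A_T|$. Now I would insert the two inequalities available: $\min(|A_n|,|B_n|)>1$ (the hypothesis) and $|A_T|>1-a_{n-T+1}=1-a_l$ (the stopping condition at time $T$). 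Together these give $|A_n-A_T|>1+|A_T|>2-a_l$ and $\max(|A_n|,|B_n|)>1+2|A_T|>3-2a_l$.

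Finally I would split according to whether $l\le k$ or $l>k$, where $a_2,\dots,a_k$ are the big terms (with $k=1$ if there are none). If $l>k$, then $a_l$ is a small term, so $a_l<0.25$ by the standing assumption that no small term has size at least $0.25$; hence $\max(|A_n|,|B_n|)>3-2a_l>2.5$, and we are done. If $l\le k$ (so $2\le l\le k$), then \Cref{sumlem} gives $a_2+\dots+a_{l-1}+2a_l\le 2$, i.e. $\sum_{i=2}^{l}a_i\le 2-a_l$; but $|A_n-A_T|=\big|\sum_{i=2}^{l}\epsilon_ia_i\big|\le\sum_{i=2}^{l}a_i\le 2-a_l$, contradicting $|A_n-A_T|>2-a_l$, so this case cannot occur. (The case $T=1$, where $l=n$, lands in the second bracket automatically, since $T=1$ forces $a_1+a_n>1$, i.e. $a_n$ big and $k=n$.)

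The main obstacle is exactly the interplay in this last step. Unlike in \Cref{protomirror}, where the threshold $0.75$ was a fixed constant, here the threshold $1-a_{n-T+1}$ varies, so the mirroring argument by itself only delivers $\max(|A_n|,|B_n|)>3-2a_l$, which beats $2.5$ precisely when $a_l<0.25$. The purpose of the big/small dichotomy — and of \Cref{sumlem}, whose conclusion $\sum_{i=2}^{l}a_i\le 2-a_l$ is calibrated to cancel exactly the ``$2-a_l$'' produced by the reflection — is to show that in the complementary range (namely $a_l$ big, forcing $a_l>1-a_1>0.25$) the event $\{|A_n|>1,\ |B_n|>1\}$ is vacuous. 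Making the constants in these two regimes dovetail is the crux of the argument.
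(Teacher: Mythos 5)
Your proof is correct and follows essentially the same route as the paper: the mirroring identity and opposite-sign step, then a dichotomy on whether $a_{n-T+1}$ is big or small, with \Cref{sumlem} ruling out the big case and the small case ($a_{n-T+1}<0.25$) yielding $\max(|A_n|,|B_n|)>3-2a_{n-T+1}>2.5$. The only difference is cosmetic: you phrase the big-case contradiction as $|A_n-A_T|>2-a_l$ versus $\sum_{i=2}^{l}a_i\le 2-a_l$, while the paper equivalently shows both $A_n,B_n>-1$, contradicting the sign condition.
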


\begin{proof}
Assume $|A_n|,|B_n|>1$. Clearly $T\leq n-1 $ as otherwise if $T=n,n+1$, then by construction we have $|A_n|,|B_n|\leq1$. Now for $T \leq n-1$, note that we have $|A_T|\leq 1$ and hence 
\begin{equation*}
|A_n|+|B_n|>2 \geq 2 |A_T| = |A_n+B_n|    
\end{equation*}
It follows that $A_n$ and $B_n$ must have opposite signs.

We argue $T < n-k+1$. Indeed, assume for the sake of contradiction that $ n-k+1\leq T<n$, and furthermore assume that $A_{T}>1-a_{n-T+1}$. As $n-T+1\leq k$, by \Cref{sumlem} we have that
$$A_n,B_n\geq A_{T}-(a_{2}+...+a_{n-T+1}) > 1-(a_{2}+...+2a_{n-T+1}) \geq -1.$$ 
This gives the desired contradiction as $A_{n}$ and $B_{n}$ have modulus strictly greater than $1$ and opposite signs.

For $T < n-k+1$, we get that $a_{n-T+1}$ is a small term, so $|A_{T}|>0.75$. As $A_n,B_n$ have opposite signs we have $|A_n+B_n|=||A_n|-|B_n||$. Therefore, putting all together we have that \begin{align*}
    1.5&\leq 2|A_T|\\
    &=|A_n+B_n|\\
    &=||A_n|-|B_n||\\
    &= \max(|A_n|,|B_n|) - \min(|A_n|,|B_n|) \\
    &< \max(|A_n|,|B_n|) -1.
\end{align*}
This concludes the claim.
\end{proof}

Similarly to the proof of \Cref{smalla}, we now have
\begin{align*}
\mathbb{P}(|A_{n}|>1 \text{ and } |B_{n}|>1) &\leq \mathbb{P}(|A_{n}|>2.5 \text{ or } |B_{n}|>2.5) \\
&\leq 2 \, \mathbb{P}(|A_{n}|>2.5) \\
&\leq 6.36 \, \mathbb{P}(|N(0,1)|>2.5)  <0.08,
\end{align*}
where the second inequality follows from the union bound and from the fact that $A_{n},B_{n}$ have the same distribution and the third inequality follows from \Cref{auxlem}.

We conclude that, since $\mathbb{P}(\mathbf{a})=\mathbb{P}(|A_{n}| \leq 1)=\mathbb{P}(|B_{n}| \leq 1)$, we obtain
\begin{align*}
\mathbb{P}(\mathbf{a})&=\frac{1}{2}\mathbb{P}(|A_{n}| \leq 1)+\frac{1}{2}\mathbb{P}(|B_{n}| \leq 1) \\
&\geq \frac{1}{2}(1-\mathbb{P}(|A_{n}|>1 \text{ and } |B_{n}|>1)) \\
&\geq \frac{1}{2}(1-0.08) \\
&=0.46.
\end{align*}
This finishes the proof of \Cref{bigtwo}.
\end{proof}

We conclude this section, and thus also the entire proof with some remarks. We believe that with the ideas presented here, by doing a more careful analysis in which one considers a more refined partition of the parameter space, the bound of $0.46$ could probably be further improved. However, with the current partition into cases, the bound that we get is close to optimal. Hence, to prove the full conjecture with the bound of $0.5$, new ideas will be needed.





\section*{Acknowledgements}

We would like to thank our PhD supervisor B\'ela Bollob\'as for advice regarding the final version of this note.

\end{document}